\let\oldmarginpar\marginpar
\renewcommand\marginpar[1]{\-\oldmarginpar[\raggedleft\footnotesize #1]%
{\raggedright\footnotesize #1}}
\begin{document}

\newtheorem{theorem}{Theorem}[section]
\newtheorem{corollary}[theorem]{Corollary}
\newtheorem{lemma}[theorem]{Lemma}
\newtheorem{proposition}[theorem]{Proposition}
\theoremstyle{definition}
\newtheorem{definition}[theorem]{Definition}
\theoremstyle{remark}
\newtheorem{remark}[theorem]{Remark}
\theoremstyle{definition}
\newtheorem{example}[theorem]{Example}

\numberwithin{equation}{section}

\def\R{{\mathbb R}}
\def\H{{\mathbb H}}
\def\rank{{\text{rank}\,}}
\def\bd{{\partial}}

\title[H-anti-invariant submersions]{H-anti-invariant submersions from almost quaternionic Hermitian manifolds}

\author{Kwang-Soon Park}
\address{Department of Mathematical Sciences, Seoul National University, Seoul 151-747, Republic of Korea}
\email{parkksn@gmail.com}

\keywords{Riemannian submersion; Lagrangian; totally geodesic; decomposition theorem}

\subjclass[2000]{53C15; 53C26.}   

\begin{abstract}
As a generalization of anti-invariant Riemannian submersions and Lagrangian Riemannian submersions,
we introduce the notions of h-anti-invariant submersions and h-Lagrangian submersions from almost quaternionic
Hermitian manifolds onto Riemannian manifolds. We obtain characterizations and investigate some properties: the integrability of distributions,
the geometry of foliations, and the harmonicity of such maps. We also find a condition for such maps to be totally geodesic and
give some examples of such maps. Finally, we obtain some types of decomposition theorems.
\end{abstract}

\maketitle
\section{Introduction}\label{intro}
\addcontentsline{toc}{section}{Introduction}

In 1960s, B. O'Neill \cite{O} and A. Gray \cite{G} introduced independently the notion of a Riemannian submersion, which is useful in many areas: 
physics (\cite{BL2}, \cite{W2}, \cite{BL}, \cite{IV}, \cite{IV2}, \cite{M}),  medical imaging \cite{MST}, robotic theory \cite{A} (see \cite{S1}).

In 1976, B. Watson \cite{W} defined almost Hermitian submersions, which are Riemannian submersions from almost Hermitian manifolds onto
almost Hermitian manifolds. Using this notion, he investigates a kind of structural problems among base manifold, fibers, total manifold.
This notion was extended to almost contact manifolds \cite{C2}, locally conformal K\"{a}hler manifolds \cite{MR}, and
quaternion K\"{a}hler manifolds \cite{IMV}.

In 2010, B. Sahin \cite{S0} introduced the notions of anti-invariant Riemannian submersions and Lagrangian Riemannian submersions
 from almost Hermitian manifolds onto Riemannian manifolds.
Using this notions, he studies total manifolds. In particular, he investigates some kinds of decomposition theorems.

We know that Riemannian submersions are related with physics and have
 applications in Yang-Mills theory (\cite{BL2},
\cite{W2}), Kaluza-Klein theory (\cite{BL}, \cite{IV}),
Supergravity and superstring theories (\cite{IV2}, \cite{M}).
And quaternionic K\"{a}hler manifolds have applications in physics as the target
spaces for nonlinear $\sigma-$models with supersymmetry \cite{CMMS}.

The paper is organized as follows. In section 2 we recall some notions, which are needed in the later sections.
In section 3 we introduce the notions of h-anti-invariant submersions and h-Lagrangian submersions from almost quaternionic Hermitian
manifolds onto Riemannian manifolds, give examples, and investigate some properties:
the integrability of distributions, the geometry of foliations, the condition for such maps to be totally geodesic,
and the condition for such maps to be harmonic.
In section 4 under h-anti-invariant submersions and h-Lagrangian submersions, we consider some decomposition theorems.

\section{Preliminaries}\label{Prel}

Let $(M,g,J)$ be an almost Hermitian manifold, where $M$ is a $C^{\infty}$-manifold, $g$ is a Riemannian metric on $M$, and $J$ is
a compatible almost complex structure on $(M,g)$. (i.e., $J\in End(TM)$, $J^2 = -id$, $g(JX, JY) = g(X, Y)$ for $X,Y\in \Gamma(TM)$.)

We call $(M,g,J)$ a {\em K\"{a}hler manifold} if $\nabla J = 0$, where $\nabla$ is the Levi-Civita connection of $g$.

Let $(M,g_M)$ and $(N,g_N)$ be Riemannian manifolds.

Let $F : (M,g_M) \mapsto (N,g_N)$ be a $C^{\infty}$-map.

The {\em second fundamental form} of $F$ is given by
$$
(\nabla F_*)(U,V) := \nabla^F _U F_* V-F_* (\nabla _U V) \quad
\text{for} \ U,V\in \Gamma(TM),
$$
where $\nabla^F$ is the pullback connection along $F$ and $\nabla$ is the
Levi-Civita connection of $g_M$ \cite{BW}.

Then the map $F$ is {\em harmonic} if and only if $trace (\nabla F_*) = 0$ \cite{BW}.

We call $F$ a {\em totally geodesic map} if $(\nabla F_*)(U,V) = 0$ for
$U,V\in \Gamma (TM)$ \cite{BW}.

The map $F$ is said to be a {\em $C^{\infty}$-submersion} if $F$ is surjective and the differential $(F_*)_p$  has a maximal rank for any $p\in M$.

We call $F$ a {\em Riemannian submersion} (\cite{O}, \cite{FIP}) if $F$ is a $C^{\infty}$-submersion and
\begin{equation}\label{eq: 2-1}
(F_*)_p : ((\ker (F_*)_p)^{\perp}, (g_M)_p) \mapsto (T_{F(p)} N, (g_N)_{F(p)})
\end{equation}
is a linear isometry for any $p\in M$,
where $(\ker (F_*)_p)^{\perp}$ is the orthogonal complement of the space $\ker (F_*)_p$ in the tangent space $T_p M$ to $M$ at $p$.

Let $F : (M,g_M) \mapsto (N,g_N)$ be a Riemannian submersion.

For any vector field $U\in \Gamma(TM)$, we write
\begin{equation}\label{eq: 2-2}
U = \mathcal{V}U + \mathcal{H}U,
\end{equation}
where $\mathcal{V}U\in \Gamma(\ker F_*)$ and $\mathcal{H}U\in
\Gamma((\ker F_*)^{\perp})$.

Define the O'Neill tensors $\mathcal{T}$ and $\mathcal{A}$ by
\begin{eqnarray}
  \mathcal{A}_U V &=& \mathcal{H}\nabla_{\mathcal{H}U} \mathcal{V}V + \mathcal{V}\nabla_{\mathcal{H}U} \mathcal{H}V,  \label{eq: 2-3} \\
  \mathcal{T}_U V &=& \mathcal{H}\nabla_{\mathcal{V}U} \mathcal{V}V + \mathcal{V}\nabla_{\mathcal{V}U} \mathcal{H}V  \label{eq: 2-4}
\end{eqnarray}
for $U, V\in \Gamma(TM)$, where $\nabla$ is the
Levi-Civita connection of $g_M$ (\cite{O}, \cite{FIP}).

Let
\begin{equation}\label{eq: 2-5}
\widehat{\nabla}_V W := \mathcal{V}\nabla_V W \quad \text{for} \ V,W\in \Gamma(\ker F_*).
\end{equation}
Then we have
\begin{eqnarray}
  \mathcal{A}_X Y &=& -\mathcal{A}_Y X = \frac{1}{2}\mathcal{V}[X, Y],   \label{eq: 2-6} \\
  \mathcal{T}_U V &=& \mathcal{T}_V U  \label{eq: 2-7}
\end{eqnarray}
for $X,Y\in \Gamma((\ker F_*)^{\perp})$ and $U,V\in \Gamma(\ker F_*)$.

\begin{proposition}\label{prop: 2-1}  (\cite{O}, \cite{FIP})
Let $F$ be a Riemannian submersion from a Riemannian manifold $(M,g_M)$ onto a Riemannian manifold $(N,g_N)$.
Then we obtain
\begin{eqnarray}
  g_M(\mathcal{T}_U V, W) &=& -g_M(V, \mathcal{T}_U W), \label{eq: 2-71} \\
  g_M(\mathcal{A}_U V, W) &=& -g_M(V, \mathcal{A}_U W), \label{eq: 2-72} \\
  (\nabla F_*)(U,V) &=& (\nabla F_*)(V,U),   \label{eq: 2-8} \\
  (\nabla F_*)(X,Y) &=& 0  \label{eq: 2-9}
  \end{eqnarray}
for $U,V,W\in \Gamma(TM)$ and $X,Y\in \Gamma((\ker F_*)^{\perp})$.
\end{proposition}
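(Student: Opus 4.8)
The plan is to treat the four identities in three groups according to the structure each relies on. Identities \eqref{eq: 2-71} and \eqref{eq: 2-72} are purely tensorial skew-symmetry statements about the O'Neill tensors and will follow from metric compatibility of $\nabla$ together with the orthogonality of the vertical and horizontal distributions. Identity \eqref{eq: 2-8} is the general symmetry of the second fundamental form of a smooth map between Riemannian manifolds, and uses only that both Levi-Civita connections are torsion-free. Identity \eqref{eq: 2-9} is the genuinely submersion-specific statement, and is where the defining isometry property \eqref{eq: 2-1} enters.

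For \eqref{eq: 2-71} I would first note that, by \eqref{eq: 2-4}, $\mathcal{T}_U$ depends only on $\mathcal{V}U$ and reverses the bigrading, sending vertical vectors to horizontal ones and horizontal vectors to vertical ones. Decomposing $V$ and $W$ into their vertical and horizontal parts and pairing like components, I would rewrite $g_M(\mathcal{T}_U V, W)$ as the sum $g_M(\nabla_{\mathcal{V}U}\mathcal{V}V,\mathcal{H}W)+g_M(\nabla_{\mathcal{V}U}\mathcal{H}V,\mathcal{V}W)$, dropping the projections since in each term the other factor is already pure. The key step is then to differentiate the identities $g_M(\mathcal{V}V,\mathcal{H}W)=0$ and $g_M(\mathcal{H}V,\mathcal{V}W)=0$ along $\mathcal{V}U$ using metric compatibility; this moves the derivative onto the remaining factor and, after reprojecting, reproduces exactly $-g_M(V,\mathcal{T}_U W)$. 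The proof of \eqref{eq: 2-72} is then verbatim the same argument with $\mathcal{V}U$ replaced by $\mathcal{H}U$ throughout, reading off the components from \eqref{eq: 2-3}.

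For \eqref{eq: 2-8} I would subtract the two second fundamental forms and use the definition to obtain $(\nabla F_*)(U,V)-(\nabla F_*)(V,U)=\nabla^F_U F_*V-\nabla^F_V F_*U-F_*(\nabla_U V-\nabla_V U)$. Since $\nabla$ is torsion-free, $\nabla_U V-\nabla_V U=[U,V]$, and since the pullback connection $\nabla^F$ is induced by the torsion-free Levi-Civita connection on $N$, one has $\nabla^F_U F_*V-\nabla^F_V F_*U=F_*[U,V]$, so the difference vanishes. For \eqref{eq: 2-9} I would use that $(\nabla F_*)$ is $C^{\infty}(M)$-bilinear, so it suffices to verify the identity on basic horizontal fields $X,Y$, that is, horizontal lifts of vector fields $X',Y'$ on $N$. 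For such fields $F_*X$ and $F_*Y$ are the pullbacks of $X',Y'$, whence $\nabla^F_X F_*Y=F_*(\mathcal{H}\nabla_X Y)$ by O'Neill's formula identifying the horizontal part of the covariant derivative of basic fields with the horizontal lift of the Levi-Civita derivative on $N$; on the other hand $F_*(\nabla_X Y)=F_*(\mathcal{H}\nabla_X Y)$ because $F_*$ annihilates vertical vectors. Equating the two yields $(\nabla F_*)(X,Y)=0$.

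The main obstacle is \eqref{eq: 2-9}: it is the only identity that genuinely requires the Riemannian submersion hypothesis rather than formal manipulation, and making the argument clean hinges on the reduction to basic vector fields and on the identification, via the isometry \eqref{eq: 2-1}, of $\nabla^F_X F_*Y$ with the horizontal lift of the corresponding covariant derivative on $N$. Everything else is routine bookkeeping with the projections $\mathcal{V},\mathcal{H}$ and metric compatibility.
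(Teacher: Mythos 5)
Your proof is correct. Note that the paper itself gives no proof of Proposition \ref{prop: 2-1} --- it is quoted from \cite{O} and \cite{FIP} --- and your argument is essentially the standard one found in those references: skew-symmetry of $\mathcal{T}$ and $\mathcal{A}$ from metric compatibility applied to the vanishing pairings of vertical and horizontal components, symmetry of $\nabla F_*$ from torsion-freeness of both Levi-Civita connections via $\nabla^F_U F_*V-\nabla^F_V F_*U=F_*[U,V]$, and the vanishing of $(\nabla F_*)(X,Y)$ on horizontal vectors by tensoriality, reduction to basic fields, and O'Neill's lemma that $\mathcal{H}\nabla_X Y$ is the basic field $F$-related to $\nabla^N_{X'}Y'$.
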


We remind the notions of an anti-invariant Riemannian submersion and a Lagrangian Riemannian submersion.

Let $F$ be a Riemannian submersion from an almost Hermitian manifold $(M,g_M,J)$ onto a Riemannian manifold $(N,g_N)$.
The map $F$ is said to be an {\em anti-invariant Riemannian submersion} \cite{S0} if $J(\ker F_*) \subset (\ker F_*)^{\perp}$.

We call $F$ a {\em Lagrangian Riemannian submersion} \cite{S0} if $J(\ker F_*) = (\ker F_*)^{\perp}$.

Let $M$ be a $4m-$dimensional $C^{\infty}$-manifold and let $E$ be a rank 3 subbundle of
$End (TM)$ such that for any point $p\in M$ with a
neighborhood $U$, there exists a local basis $\{ J_1,J_2,J_3 \}$
of sections of $E$ on $U$ satisfying for all $\alpha\in \{ 1,2,3 \}$
$$
J_{\alpha}^2=-id, \quad
J_{\alpha}J_{\alpha+1}=-J_{\alpha+1}J_{\alpha}=J_{\alpha+2},
$$
where the indices are taken from $\{ 1,2,3 \}$ modulo 3.

Then we call $E$ an {\em almost quaternionic structure} on $M$ and $(M,E)$ an {\em almost quaternionic manifold} \cite{AM}.

Moreover, let $g$ be a Riemannian metric on $M$ such that for any point $p\in M$ with a
neighborhood $U$, there exists a local basis $\{ J_1,J_2,J_3 \}$
of sections of $E$ on $U$ satisfying for all $\alpha\in \{ 1,2,3 \}$
\begin{equation}\label{hypercom}
J_{\alpha}^2=-id, \quad
J_{\alpha}J_{\alpha+1}=-J_{\alpha+1}J_{\alpha}=J_{\alpha+2},
\end{equation}
\begin{equation}\label{hypermet}
g(J_{\alpha}X, J_{\alpha}Y)=g(X, Y)
\end{equation}
for  $X, Y\in \Gamma(TM)$, where the indices are
taken from $\{ 1,2,3 \}$ modulo 3.

Then we call $(M,E,g)$ an {\em almost quaternionic Hermitian manifold} \cite{IMV}.

For convenience, the above basis $\{ J_1,J_2,J_3 \}$ satisfying (\ref{hypercom}) and (\ref{hypermet}) is said to be a {\em quaternionic Hermitian basis}.

Let $(M,E,g)$ be an almost quaternionic Hermitian manifold.

We call $(M,E,g)$ a {\em quaternionic K\"{a}hler manifold} if given a point $p\in M$ with a neighborhood $U$, there exist
 1-forms $\omega_1, \omega_2, \omega_3$ on $U$ such that
for any $\alpha \in \{ 1,2,3 \}$,
$$
\nabla_X J_{\alpha} =
\omega_{\alpha+2}(X)J_{\alpha+1}-\omega_{\alpha+1}(X)J_{\alpha+2}
$$
for $X\in \Gamma(TM)$, where the indices are
taken from $\{ 1,2,3 \}$ modulo 3 \cite{IMV}.

If there exists a global parallel quaternionic Hermitian basis $\{ J_1,J_2,J_3 \}$ of
sections of $E$ on $M$ (i.e., $\nabla J_{\alpha} = 0$ for $\alpha \in \{ 1,2,3 \}$, where $\nabla$ is the Levi-Civita connection of $g$),
then $(M, E, g)$ is said to be a {\em hyperk\"{a}hler manifold}.
Furthermore, we call $(J_1, J_2, J_3, g )$ a {\em hyperk\"{a}hler structure} on $M$ and $g$ a {\em hyperk\"{a}hler metric} \cite{B}.

Now, we recall the notions of almost h-slant submersions, almost h-semi-invariant submersions, and almost h-semi-slant submersions.

Let $(M, E, g_M)$ be an almost quaternionic Hermitian manifold and $(N, g_N)$ a Riemannian manifold.

A Riemannian submersion $F : (M, E, g_M) \mapsto (N,
g_N)$ is said to be an {\em almost h-slant submersion} if given a
point $p\in M$ with a neighborhood $U$, there exists a
 quaternionic Hermitian basis $\{ I,J,K \}$ of sections of
$E$ on $U$ such that for $R\in \{ I,J,K \}$ the angle
$\theta_R(X)$ between $RX$ and the space $\ker (F_*)_q$ is
constant for nonzero $X\in \ker (F_*)_q$ and $q\in U$ \cite{P}.

A Riemannian submersion $F : (M,E,g_M) \mapsto (N,g_N)$ is called an {\em almost
h-semi-invariant submersion} if given a point $p\in M$ with a neighborhood $U$, there exists a  quaternionic Hermitian
basis $\{ I,J,K \}$ of sections of $E$ on $U$ such that for each
$R\in \{ I,J,K \}$, there is a distribution $\mathcal{D}_1^R
\subset \ker F_*$ on $U$ such that
$$
\ker F_* =\mathcal{D}_1^R\oplus \mathcal{D}_2^R, \
R(\mathcal{D}_1^R)=\mathcal{D}_1^R, \ R(\mathcal{D}_2^R)\subset
(\ker F_*)^{\perp},
$$
where $\mathcal{D}_2^R$ is the orthogonal complement of
$\mathcal{D}_1^R$ in $\ker F_*$ \cite{P2}.

A Riemannian submersion $F : (M,E,g_M) \mapsto (N,g_N)$ is called an {\em almost h-semi-slant submersions} if
given a point $p\in M$ with a neighborhood
$U$, there exists a  quaternionic Hermitian basis $\{ I,J,K \}$ of
sections of $E$ on $U$ such that for each $R\in \{ I,J,K \}$,
there is a distribution $\mathcal{D}_1^R \subset \ker F_*$ on $U$
such that
$$
\ker F_* =\mathcal{D}_1^R\oplus \mathcal{D}_2^R, \
R(\mathcal{D}_1^R)=\mathcal{D}_1^R,
$$
and the angle $\theta_R=\theta_R(X)$ between $RX$ and the space
$(\mathcal{D}_2^R)_q$ is constant for nonzero $X\in
(\mathcal{D}_2^R)_q$ and $q\in U$, where $\mathcal{D}_2^R$ is the
orthogonal complement of $\mathcal{D}_1^R$ in $\ker F_*$ \cite{P3}.

Throughout this paper, we will use the above notations.

\section{H-anti-invariant submersions}\label{semi}

In this section, we introduce the notions of h-anti-invariant submersions and h-Lagrangian submersions from almost quaternionic Hermitian manifolds
onto Riemannian manifolds and investigate their properties.

\begin{definition}
Let $(M,E,g_M)$ be an almost quaternionic Hermitian manifold and
$(N,g_N)$ a Riemannian manifold. Let $F : (M,E,g_M)\mapsto (N,g_N)$ be a Riemannian submersion.
We call the map $F$ a {\em h-anti-invariant submersion} if given a point $p\in M$ with a neighborhood
$U$, there exists a  quaternionic Hermitian basis $\{ I,J,K \}$ of
sections of $E$ on $U$ such that $R(\ker F_*) \subset (\ker F_*)^{\perp}$ for $R\in \{ I,J,K \}$.
\end{definition}

We call such a basis $\{ I,J,K \}$ a {\em h-anti-invariant basis}.

\begin{remark}
As we see, a h-anti-invariant submersion is one of the particular cases of
an almost h-slant submersion, an almost h-semi-invariant submersion, and an almost h-semi-slant submersion.
\end{remark}

\begin{remark}\label{rem: 3-1}
Let $F$ be a h-anti-invariant submersion from an almost quaternionic Hermitian manifold $(M,E,g_M)$ onto
a Riemannian manifold $(N,g_N)$. Then there does not exist such a map $F$ such that $\dim (\ker F_*) = \dim ((\ker F_*)^{\perp})$.
If not, then given a local quaternionic Hermitian basis $\{ I,J,K \}$ of $E$ with $R(\ker F_*) \subset (\ker F_*)^{\perp}$
for $R\in \{ I,J,K \}$, we have
$$
R(\ker F_*) = (\ker F_*)^{\perp}  \quad \text{for} \ R\in \{ I,J,K \}
$$
so that
$$
K(\ker F_*) = IJ(\ker F_*) = I((\ker F_*)^{\perp}) = (\ker F_*),
$$
contradiction!
\end{remark}

From Remark \ref{rem: 3-1}, we need to define another type of such a map.

\begin{definition}
Let $(M,E,g_M)$ be an almost quaternionic Hermitian manifold and
$(N,g_N)$ a Riemannian manifold. Let $F : (M,E,g_M)\mapsto (N,g_N)$ be a Riemannian submersion.
We call the map $F$ a {\em h-Lagrangian submersion} if given a point $p\in M$ with a neighborhood
$U$, there exists a  quaternionic Hermitian basis $\{ I,J,K \}$ of
sections of $E$ on $U$ such that $I(\ker F_*) = (\ker F_*)^{\perp}$, $J(\ker F_*) = \ker F_*$,
and $K(\ker F_*) = (\ker F_*)^{\perp}$.
\end{definition}

We call such a basis $\{ I,J,K \}$ a {\em h-Lagrangian basis}.

\begin{remark}
(a) It is easy to check that $J(\ker F_*) = \ker F_*$ implies $J((\ker F_*)^{\perp}) = (\ker F_*)^{\perp}$.

(b) Let $F$ be a Riemannian submersion from an almost quaternionic Hermitian manifold $(M,E,g_M)$ onto
a Riemannian manifold $(N,g_N)$ such that $\dim (\ker F_*) = \dim ((\ker F_*)^{\perp})$. Then there does not exist
such a map $F$ such that for some local quaternionic Hermitian basis $\{ I,J,K \}$ of $E$, we have
$$
I(\ker F_*) = \ker F_*, \ J(\ker F_*) = \ker F_*, \ K(\ker F_*) = (\ker F_*)^{\perp}.
$$
If not, then $K(\ker F_*) = IJ(\ker F_*) = I(\ker F_*) = \ker F_*$, contradiction!
\end{remark}

Now, we give some examples.
Note that given an Euclidean space $\mathbb{R}^{4m}$ with
coordinates $(x_1,x_2,\cdots,x_{4m})$, we can canonically choose
complex structures $I, J, K$ on $\mathbb{R}^{4m}$ as follows:
\begin{align*}
  &I(\tfrac{\partial}{\partial x_{4k+1}})=\tfrac{\partial}{\partial x_{4k+2}},
  I(\tfrac{\partial}{\partial x_{4k+2}})=-\tfrac{\partial}{\partial x_{4k+1}},
  I(\tfrac{\partial}{\partial x_{4k+3}})=\tfrac{\partial}{\partial x_{4k+4}},
  I(\tfrac{\partial}{\partial x_{4k+4}})=-\tfrac{\partial}{\partial x_{4k+3}},     \\
  &J(\tfrac{\partial}{\partial x_{4k+1}})=\tfrac{\partial}{\partial x_{4k+3}},
  J(\tfrac{\partial}{\partial x_{4k+2}})=-\tfrac{\partial}{\partial x_{4k+4}},
  J(\tfrac{\partial}{\partial x_{4k+3}})=-\tfrac{\partial}{\partial x_{4k+1}},
  J(\tfrac{\partial}{\partial x_{4k+4}})=\tfrac{\partial}{\partial x_{4k+2}},    \\
  &K(\tfrac{\partial}{\partial x_{4k+1}})=\tfrac{\partial}{\partial x_{4k+4}},
  K(\tfrac{\partial}{\partial x_{4k+2}})=\tfrac{\partial}{\partial x_{4k+3}},
  K(\tfrac{\partial}{\partial x_{4k+3}})=-\tfrac{\partial}{\partial x_{4k+2}},
  K(\tfrac{\partial}{\partial x_{4k+4}})=-\tfrac{\partial}{\partial x_{4k+1}}
\end{align*}
for $k\in \{ 0,1,\cdots,m-1 \}$.

Then we easily check that $(I,J,K,\langle \ ,\ \rangle)$ is a hyperk\"{a}hler structure on $\mathbb{R}^{4m}$,
where $\langle \ ,\ \rangle$ denotes the Euclidean metric on $\mathbb{R}^{4m}$.

\begin{example}
Define a map $F : \mathbb{R}^{12} \mapsto \mathbb{R}^9$ by
$$
F(x_1,\cdots,x_{12}) = (x_{10},x_{11},x_{12},x_4,x_3,x_2,x_8,x_6,x_7).
$$
Then the map $F$ is a h-anti-invariant submersion such that
\begin{align*}
&\ker F_* = < \frac{\partial}{\partial x_1}, \frac{\partial}{\partial x_5},\frac{\partial}{\partial x_9} >,     \\
&(\ker F_*)^{\perp} = < \frac{\partial}{\partial x_2}, \frac{\partial}{\partial x_3},\frac{\partial}{\partial x_4},
  \frac{\partial}{\partial x_6}, \frac{\partial}{\partial x_7},\frac{\partial}{\partial x_8},
  \frac{\partial}{\partial x_{10}}, \frac{\partial}{\partial x_{11}},\frac{\partial}{\partial x_{12}}>,     \\
&I(\frac{\partial}{\partial x_1}) = \frac{\partial}{\partial x_2},  I(\frac{\partial}{\partial x_5}) = \frac{\partial}{\partial x_6},
I(\frac{\partial}{\partial x_9}) = \frac{\partial}{\partial x_{10}},   \\
&J(\frac{\partial}{\partial x_1}) = \frac{\partial}{\partial x_3},  J(\frac{\partial}{\partial x_5}) = \frac{\partial}{\partial x_7},
J(\frac{\partial}{\partial x_9}) = \frac{\partial}{\partial x_{11}},   \\
&K(\frac{\partial}{\partial x_1}) = \frac{\partial}{\partial x_4}, K(\frac{\partial}{\partial x_5}) = \frac{\partial}{\partial x_8},
K(\frac{\partial}{\partial x_9}) = \frac{\partial}{\partial x_{12}}.
\end{align*}
\end{example}

\begin{example}
Define a map $F : \mathbb{R}^{4} \mapsto \mathbb{R}^2$ by
$$
F(x_1,\cdots,x_{4}) = (\frac{x_{2} + x_{3}}{\sqrt{2}},\frac{x_{1} + x_{4}}{\sqrt{2}}).
$$
Then the map $F$ is a h-Lagrangian submersion such that
\begin{align*}
&\ker F_* = < V_1 = \frac{\partial}{\partial x_2} - \frac{\partial}{\partial x_3},
V_2 = \frac{\partial}{\partial x_1} - \frac{\partial}{\partial x_4} >,     \\
&(\ker F_*)^{\perp} = < X_1 = \frac{\partial}{\partial x_2} + \frac{\partial}{\partial x_3},
  X_2 = \frac{\partial}{\partial x_1} + \frac{\partial}{\partial x_4}>,     \\
&I(V_1) = -X_2,  I(V_2) = X_1,   \\
&J(V_1) = V_2, J(V_2) = -V_1,   \\
&K(V_1) = X_1, K(V_2) = X_2.
\end{align*}
\end{example}

Let $F$ be a h-anti-invariant submersion (or a h-Lagrangian submersion, respectively) from an almost quaternionic Hermitian manifold $(M,E,g_M)$
onto a Riemannian manifold $(N,g_N)$. Given a point $p\in M$ with a neighborhood $U$, we have a h-anti-invariant basis
(or a h-Lagrangian basis, respectively) $\{ I,J,K \}$ of sections of $E$ on $U$.

Then given $X\in \Gamma((\ker F_*)^{\perp})$ and $R\in \{ I,J,K \}$, we write
\begin{equation}\label{eq: 3-1}
RX = B_R X + C_R X,
\end{equation}
where $B_R X\in \Gamma(\ker F_*)$ and $C_R X\in \Gamma((\ker F_*)^{\perp})$.

If $F : (M,E,g_M) \mapsto (N,g_N)$ is a h-anti-invariant submersion, then we get
\begin{equation}\label{eq: 3-2}
(\ker F_*)^{\perp} = R(\ker F_*) \oplus \mu_R \quad \text{for} \ R\in \{ I,J,K \}.
\end{equation}
Then it is easy to check that $\mu_R$ is $R$-invariant for $R\in \{ I,J,K \}$.

Given $X\in \Gamma((\ker F_*)^{\perp})$ and $R\in \{ I,J,K \}$, we have
\begin{equation}\label{eq: 3-3}
X = P_R X + Q_R X,
\end{equation}
where $P_R X\in \Gamma(R(\ker F_*))$ and $Q_R X\in \Gamma(\mu_R)$.

Furthermore, given $R\in \{ I,J,K \}$, we obtain
\begin{equation}\label{eq: 3-4}
C_R X\in \Gamma(\mu_R)   \quad \text{for} \ X\in \Gamma((\ker F_*)^{\perp}) 
\end{equation}
and
\begin{equation}\label{eq: 3-5}
g_M(C_R X, RV) = 0  \quad \text{for} \ V\in \Gamma(\ker F_*).
\end{equation}
Then it is easy to have

\begin{lemma}
Let $F$ be a h-anti-invariant submersion from a
hyperk\"{a}hler manifold $(M,I,J,K,g_M)$ onto a Riemannian
manifold $(N, g_N)$ such that $(I,J,K)$ is a h-anti-invariant
basis. Then we get

\begin{enumerate}
\item
\begin{align*}
  &\mathcal{T}_V RW = B_R \mathcal{T}_V W    \\
  &\mathcal{H}\nabla_V RW = C_R \mathcal{T}_V W + R \widehat{\nabla}_V W
\end{align*}
for $V,W\in \Gamma(\ker F_*)$ and $R\in \{ I,J,K \}$.
\item
\begin{align*}
  &\mathcal{A}_X C_R Y + \mathcal{V}\nabla_X B_R Y = B_R \mathcal{H} \nabla_X Y    \\
  &\mathcal{H} \nabla_X C_R Y + \mathcal{A}_X B_R Y = R\mathcal{A}_X Y + C_R \mathcal{H}\nabla_X Y
\end{align*}
for $X,Y\in \Gamma((\ker F_*)^{\perp})$ and $R\in \{ I,J,K \}$.
\item
\begin{align*}
  &\mathcal{A}_X RV = B_R \mathcal{A}_X V   \\
  &\mathcal{H}\nabla_X RV = C_R \mathcal{A}_X V + R\mathcal{V}\nabla_X V
\end{align*}
for $V\in \Gamma(\ker F_*)$, $X\in \Gamma((\ker F_*)^{\perp})$,
and $R\in \{ I,J,K \}$.
\end{enumerate}
\end{lemma}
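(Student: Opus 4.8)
The plan is to derive all six identities by a single, uniform computational strategy: start from the defining relations of the O'Neill tensors together with the hyperkähler condition $\nabla R = 0$ for $R \in \{I,J,K\}$, then project the resulting expressions onto $\ker F_*$ and $(\ker F_*)^\perp$ and read off the vertical and horizontal components. Since $(M,I,J,K,g_M)$ is hyperkähler, each $R$ is parallel, so $\nabla_U(RW) = R(\nabla_U W)$ for all $U$. This is the key structural input that converts covariant derivatives of $R$-images into $R$-images of covariant derivatives, after which the decompositions \eqref{eq: 2-2} and \eqref{eq: 3-1} do the remaining bookkeeping.

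\emph{Part (1).} Take $V,W \in \Gamma(\ker F_*)$. By \eqref{eq: 2-4} and \eqref{eq: 2-5}, $\nabla_V W = \mathcal{T}_V W + \widehat{\nabla}_V W$, where $\mathcal{T}_V W \in \Gamma((\ker F_*)^\perp)$ and $\widehat{\nabla}_V W \in \Gamma(\ker F_*)$. Applying $R$ and using parallelism gives
\[
\nabla_V (RW) = R\,\mathcal{T}_V W + R\,\widehat{\nabla}_V W.
\]
Now I would expand the left-hand side as $\nabla_V(RW) = \mathcal{T}_V(RW) + \mathcal{H}\nabla_V(RW)$, noting $RW \in \Gamma((\ker F_*)^\perp)$ since $R$ is h-anti-invariant, so that $\mathcal{V}(RW)=0$ and $\mathcal{T}_V(RW) = \mathcal{H}\nabla_{\mathcal{V}V}\mathcal{V}(RW)$ collapses appropriately. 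On the right, decompose $R\,\mathcal{T}_V W = B_R\mathcal{T}_V W + C_R\mathcal{T}_V W$ via \eqref{eq: 3-1}, where $B_R\mathcal{T}_V W \in \Gamma(\ker F_*)$ and $C_R\mathcal{T}_V W \in \Gamma((\ker F_*)^\perp)$, while $R\,\widehat{\nabla}_V W \in \Gamma((\ker F_*)^\perp)$. Matching the vertical parts yields $\mathcal{T}_V(RW) = B_R\mathcal{T}_V W$, and matching the horizontal parts yields $\mathcal{H}\nabla_V(RW) = C_R\mathcal{T}_V W + R\,\widehat{\nabla}_V W$.

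\emph{Parts (2) and (3)} follow the same template with different inputs. For Part (2), take $X,Y \in \Gamma((\ker F_*)^\perp)$, write $RY = B_R Y + C_R Y$ by \eqref{eq: 3-1}, and apply $R$-parallelism to $\nabla_X(RY) = R(\nabla_X Y)$. Expanding $\nabla_X Y = \mathcal{A}_X Y + \mathcal{H}\nabla_X Y$ on the right and $\nabla_X(B_R Y) + \nabla_X(C_R Y)$ on the left, each term splits through \eqref{eq: 2-3} and \eqref{eq: 3-1}; collecting vertical and horizontal components gives the two stated equations. For Part (3), take $V \in \Gamma(\ker F_*)$ and $X \in \Gamma((\ker F_*)^\perp)$, start from $\nabla_X(RV) = R(\nabla_X V)$ with $RV \in \Gamma((\ker F_*)^\perp)$, expand $\nabla_X V = \mathcal{A}_X V + \mathcal{V}\nabla_X V$, decompose $R\,\mathcal{A}_X V = B_R\mathcal{A}_X V + C_R\mathcal{A}_X V$, and project.

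The computations are genuinely routine once parallelism is invoked; the only point demanding care is the \textbf{correct identification of which O'Neill tensor appears}, since this depends on whether each argument is vertical or horizontal. The main obstacle, such as it is, lies in systematically tracking these projections and in applying the defining formulas \eqref{eq: 2-3}--\eqref{eq: 2-5} to the \emph{correct} component of each term (for instance, recognizing that $\mathcal{A}_X(RV)$ with $RV$ horizontal picks out the vertical part of $\nabla_X(RV)$, whereas $\mathcal{T}_V(RW)$ with $RW$ horizontal picks out the horizontal part). I expect no deeper difficulty, since the hyperkähler hypothesis removes all connection-coefficient terms involving $\nabla R$ that would otherwise complicate the splitting.
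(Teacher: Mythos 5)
Your proof is correct and is precisely the argument the paper leaves implicit (the lemma is prefaced only by ``Then it is easy to have''): apply $\nabla R=0$ to convert $\nabla_U(R\,\cdot)$ into $R(\nabla_U\,\cdot)$, expand both sides via the O'Neill decompositions (\ref{eq: 2-3})--(\ref{eq: 2-5}) and (\ref{eq: 3-1}), and match vertical and horizontal components. One slip in your closing parenthetical: $\mathcal{T}_V(RW)$ with $RW$ horizontal picks out the \emph{vertical} part of $\nabla_V(RW)$ (exactly as $\mathcal{A}_X(RV)$ picks out the vertical part of $\nabla_X(RV)$), which is in fact how you correctly used it when matching components in Part (1), so this misstatement lives only in your commentary and does not affect the proof.
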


\begin{theorem}\label{thm: 3-1}
Let $F$ be a h-anti-invariant submersion from a hyperk\"{a}hler
manifold $(M,I,J,K,g_M)$ onto a Riemannian manifold $(N, g_N)$
such that $(I,J,K)$ is a h-anti-invariant basis. Then the following
conditions are equivalent:

a) the distribution $(\ker F_*)^{\perp}$ is integrable.

b)
$$
g_M(\mathcal{A}_X B_I Y - \mathcal{A}_Y B_I X, IV) = g_M(C_I Y, I\mathcal{A}_X V) - g_M(C_I X, I\mathcal{A}_Y V)
$$
for $V\in \Gamma(\ker F_*)$ and $X,Y\in \Gamma((\ker F_*)^{\perp})$.

c)
$$
g_M(\mathcal{A}_X B_J Y - \mathcal{A}_Y B_J X, JV) = g_M(C_J Y, J\mathcal{A}_X V) - g_M(C_J X, J\mathcal{A}_Y V)
$$
for $V\in \Gamma(\ker F_*)$ and $X,Y\in \Gamma((\ker F_*)^{\perp})$.

d)
$$
g_M(\mathcal{A}_X B_K Y - \mathcal{A}_Y B_K X, KV) = g_M(C_K Y, K\mathcal{A}_X V) - g_M(C_K X, K\mathcal{A}_Y V)
$$
for $V\in \Gamma(\ker F_*)$ and $X,Y\in \Gamma((\ker F_*)^{\perp})$.
\end{theorem}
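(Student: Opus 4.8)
The plan is to characterize integrability of $(\ker F_*)^\perp$ through the vanishing of the vertical part of the Lie bracket, and then translate that condition into the stated metric identities using the hyperkähler structure. Since $(\ker F_*)^\perp$ is integrable if and only if $\mathcal{V}[X,Y]=0$ for all $X,Y\in\Gamma((\ker F_*)^\perp)$, and by \eqref{eq: 2-6} we have $\mathcal{A}_X Y=\frac{1}{2}\mathcal{V}[X,Y]$, integrability is equivalent to $\mathcal{A}_X Y=\mathcal{A}_Y X$. I would test this against the vertical space by examining $g_M(\mathcal{A}_X Y-\mathcal{A}_Y X,V)$ for $V\in\Gamma(\ker F_*)$. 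The key device is that since $(I,J,K)$ is a h-anti-invariant basis, $R(\ker F_*)\subset(\ker F_*)^\perp$, so $RV\in\Gamma((\ker F_*)^\perp)$ and one can profitably rewrite $g_M(\cdot,V)$ by inserting $R$: using $R^2=-\mathrm{id}$ and the compatibility \eqref{hypermet}, write $g_M(\mathcal{A}_XY-\mathcal{A}_YX,V)=-g_M(R(\mathcal{A}_XY-\mathcal{A}_YX),RV)$, converting a pairing with a vertical vector into a pairing with the horizontal vector $RV$.

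The core computation is to expand $g_M([X,Y],RV)$ for $R\in\{I,J,K\}$. First I would write $g_M([X,Y],RV)=g_M(\nabla_X Y,RV)-g_M(\nabla_Y X,RV)$ and, using that $M$ is hyperkähler so $\nabla R=0$ (hence $R$ is parallel and skew with respect to $g_M$), push $R$ across: $g_M(\nabla_X Y,RV)=-g_M(R\nabla_X Y,V)=-g_M(\nabla_X(RY),V)$. Now decompose $RY=B_RY+C_RY$ via \eqref{eq: 3-1}, and express $\nabla_X(B_RY)$ and $\nabla_X(C_RY)$ through the O'Neill tensors and $\widehat\nabla$ using \eqref{eq: 2-3}--\eqref{eq: 2-5}. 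Taking the $g_M(\cdot,V)$ component, the terms organize into a piece involving $\mathcal{A}_X B_R Y$ (paired with $RV$ after moving $R$ back) and a piece involving $C_R Y$ paired with $\mathcal{A}_X V$, for which I would again slide $R$ across to land on $R\mathcal{A}_X V$. The projection relations \eqref{eq: 3-4} and \eqref{eq: 3-5}, which record that $C_RY\in\Gamma(\mu_R)$ and $g_M(C_RX,RV)=0$, are what let the unwanted cross terms drop out, so that only the two surviving structures remain.

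Antisymmetrizing in $X$ and $Y$ then yields exactly the right-hand combination $g_M(\mathcal{A}_X B_R Y-\mathcal{A}_Y B_R X,RV)$ on one side and $g_M(C_R Y,R\mathcal{A}_X V)-g_M(C_R X,R\mathcal{A}_Y V)$ on the other, with the integrability condition $g_M([X,Y],RV)=0$ interpolating between them. Because this derivation uses only $R^2=-\mathrm{id}$, parallelism of $R$, and the decomposition attached to that particular $R$, it runs identically for $R=I$, $R=J$, and $R=K$, producing statements (b), (c), and (d) respectively; each is separately equivalent to (a). I would present the $R=I$ case in full and remark that the other two are obtained by replacing $I$ with $J$ and $K$ throughout.

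The main obstacle I anticipate is bookkeeping rather than conceptual: one must be careful that sliding $R$ through $\nabla$ is legitimate (this is exactly where hyperkählerity, i.e. $\nabla R=0$, is essential, and it would fail for a general almost quaternionic Hermitian manifold), and that the components are projected onto the correct subbundles so that the orthogonality relations \eqref{eq: 3-4}--\eqref{eq: 3-5} genuinely kill the extra terms. A subtle point to verify is that $g_M([X,Y],RV)$ captures the full vertical obstruction: since $\{RV:V\in\Gamma(\ker F_*)\}=R(\ker F_*)$ need not exhaust $(\ker F_*)^\perp$, one should confirm that testing against $RV$ for all $V$ and a single fixed $R$ already detects $\mathcal{V}[X,Y]=0$. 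This holds because $R:\ker F_*\to R(\ker F_*)$ is an isomorphism and $g_M(\mathcal{V}[X,Y],V)=0$ for all vertical $V$ is equivalent, via the isometry $R$, to the pairing condition against $RV$; making this equivalence explicit is the step that ties the three separate identities back to the single integrability statement.
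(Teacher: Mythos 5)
Your opening reduction is the right one: by \eqref{eq: 2-6}, integrability of $(\ker F_*)^{\perp}$ is equivalent to $g_M([X,Y],V)=0$ for every $V\in\Gamma(\ker F_*)$, i.e.\ to $\mathcal{V}[X,Y]=0$. The gap is that your ``core computation'' then expands $g_M([X,Y],RV)$ instead, and this is a different obstruction. Since h-anti-invariance makes $RV$ \emph{horizontal}, one has
$$
g_M([X,Y],RV)=g_M(\mathcal{H}[X,Y],RV)=-g_M(B_R\mathcal{H}[X,Y],V),
$$
which is blind to $\mathcal{V}[X,Y]$: it measures the component of the \emph{horizontal} part of the bracket along $R(\ker F_*)$, and its vanishing for all $V$ is not equivalent to a). Concretely, in the paper's example $F:\mathbb{R}^{12}\to\mathbb{R}^{9}$, take $X=\frac{\partial}{\partial x_2}$, $Y=x_2\frac{\partial}{\partial x_{10}}$, $V=\frac{\partial}{\partial x_9}$; then $[X,Y]=\frac{\partial}{\partial x_{10}}=IV$, so $g_M([X,Y],IV)=1\neq 0$ although $(\ker F_*)^{\perp}$ is integrable. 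The slip traces back to a sign/slot confusion in your ``key device'': compatibility \eqref{hypermet} gives $g_M(A,V)=+\,g_M(RA,RV)$, with $R$ inserted in \emph{both} slots, whereas skewness $g_M(A,RV)=-g_M(RA,V)$ moves $R$ across one slot. Thus $g_M(\mathcal{V}[X,Y],V)=g_M(R\mathcal{V}[X,Y],RV)$ is correct, but this is \emph{not} $g_M([X,Y],RV)$, because $R\mathcal{V}[X,Y]$ and $\mathcal{H}[X,Y]$ are unrelated; your final paragraph asserts exactly this false identification. A further symptom: pairing $\nabla_X B_RY$ with the vertical vector $V$, as your expansion requires, produces $\mathcal{V}\nabla_X B_RY$, not $\mathcal{A}_X B_RY$ (the $\mathcal{A}$-tensor is the horizontal part of $\nabla_X B_RY$), so the terms you claim to organize would not actually appear.

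The repair — and this is the paper's proof — is to keep the vertical test vector and use $\nabla R=0$ together with \eqref{hypermet} to place $R$ in both arguments:
$$
g_M([X,Y],V)=g_M(\nabla_X RY-\nabla_Y RX,\,RV).
$$
Decomposing $RY=B_RY+C_RY$ by \eqref{eq: 3-1}, the pairing of $\nabla_X B_RY$ with the horizontal vector $RV$ now does pick out $g_M(\mathcal{A}_X B_RY,RV)$; for the $C_R$ terms, differentiating the identity $g_M(C_RY,RV)=0$ of \eqref{eq: 3-5} gives $g_M(\nabla_X C_RY,RV)=-g_M(C_RY,\nabla_X RV)=-g_M(C_RY,R\mathcal{A}_XV)$, where the piece $R\mathcal{V}\nabla_XV\in\Gamma(R(\ker F_*))$ drops out against $C_RY\in\Gamma(\mu_R)$ by \eqref{eq: 3-4} and \eqref{eq: 3-2}. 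Antisymmetrizing in $X$ and $Y$ yields
$$
g_M([X,Y],V)=g_M(\mathcal{A}_XB_RY-\mathcal{A}_YB_RX,RV)-g_M(C_RY,R\mathcal{A}_XV)+g_M(C_RX,R\mathcal{A}_YV),
$$
and reading this off for $R=I,J,K$ gives a) $\Leftrightarrow$ b), a) $\Leftrightarrow$ c), a) $\Leftrightarrow$ d). Your overall architecture (one computation run three times, with \eqref{eq: 3-4}--\eqref{eq: 3-5} killing the cross terms) is the paper's, but as written the central identity you would prove characterizes the wrong quantity, so the argument does not establish the theorem.
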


\begin{proof}
Given $V\in \Gamma(\ker F_*)$, $X,Y\in \Gamma((\ker F_*)^{\perp})$, and $R\in \{ I,J,K \}$, by using (\ref{eq: 3-5}), we get
\begin{align*}
g_M([X,Y], V) &= g_M(\nabla_X RY - \nabla_Y RX, RV)    \\
      &= g_M(\nabla_X B_R Y + \nabla_X C_R Y - \nabla_Y B_R X - \nabla_Y C_R X, RV)    \\
      &= g_M(\mathcal{A}_X B_R Y - \mathcal{A}_Y B_R X, RV) - g_M(C_R Y, \nabla_X RV) + g_M(C_R X, \nabla_Y RV)   \\
      &= g_M(\mathcal{A}_X B_R Y - \mathcal{A}_Y B_R X, RV) - g_M(C_R Y, R\mathcal{A}_X V) + g_M(C_R X, R\mathcal{A}_Y V).
\end{align*}
Hence,
$$
a) \Leftrightarrow b), \quad a) \Leftrightarrow c), \quad a) \Leftrightarrow d).
$$
Therefore, the result follows.
\end{proof}

\begin{lemma}
Let $F$ be a h-Lagrangian submersion from a hyperk\"{a}hler
manifold $(M,I,J,K,g_M)$ onto a Riemannian manifold $(N, g_N)$
such that $(I,J,K)$ is a h-Lagrangian basis. Then the following
conditions are equivalent:

a) the distribution $(\ker F_*)^{\perp}$ is integrable.

b) $\mathcal{A}_X IY = \mathcal{A}_Y IX$ for $X,Y\in \Gamma((\ker F_*)^{\perp})$.

c) $\mathcal{A}_X KY = \mathcal{A}_Y KX$ for $X,Y\in \Gamma((\ker F_*)^{\perp})$.

d) $\mathcal{A}_X JY = \mathcal{A}_Y JX$ for $X,Y\in \Gamma((\ker F_*)^{\perp})$.
\end{lemma}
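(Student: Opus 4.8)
The plan is to establish the chain of equivalences by computing $g_M([X,Y],V)$ for $V \in \Gamma(\ker F_*)$ and $X,Y \in \Gamma((\ker F_*)^\perp)$, exactly as in Theorem~\ref{thm: 3-1}, but now exploiting the extra structure of the h-Lagrangian basis to simplify the expression dramatically. Since $(\ker F_*)^\perp$ is integrable precisely when $g_M([X,Y],V)=0$ for all such $V,X,Y$, the heart of the argument is to show that this vanishing is equivalent to each of the three stated symmetry conditions on the O'Neill tensor $\mathcal{A}$.

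First I would use the crucial feature of the h-Lagrangian basis that distinguishes it from the general h-anti-invariant case: because $I(\ker F_*) = (\ker F_*)^\perp$, for $R = I$ every $RX$ with $X \in \Gamma((\ker F_*)^\perp)$ lands back in $\ker F_*$, so in the decomposition \eqref{eq: 3-1} we have $C_I X = 0$ and $I X = B_I X \in \Gamma(\ker F_*)$. This makes the last two terms of the computation in Theorem~\ref{thm: 3-1} drop out entirely, leaving
$$
g_M([X,Y],V) = g_M(\mathcal{A}_X IY - \mathcal{A}_Y IX,\, IV)
$$
after rewriting $g_M(\nabla_X IY,IV)$ using the parallelism of $I$ (hyperkähler) and the metric compatibility \eqref{hypermet}. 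The same reasoning applies verbatim to $K$, since $K(\ker F_*) = (\ker F_*)^\perp$ as well.

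Next I would observe that $IV, KV \in \Gamma((\ker F_*)^\perp)$ while $IY, IX \in \Gamma(\ker F_*)$, so by the definition \eqref{eq: 2-3} of $\mathcal{A}$ the quantity $\mathcal{A}_X IY - \mathcal{A}_Y IX$ is already vertical; pairing it against the horizontal vector $IV$ and invoking \eqref{eq: 2-72} lets me transfer the argument, and since $I$ is an isometry carrying $\ker F_*$ onto $(\ker F_*)^\perp$ bijectively, the condition $g_M([X,Y],V)=0$ for all $V$ is equivalent to $\mathcal{A}_X IY = \mathcal{A}_Y IX$. This yields $a) \Leftrightarrow b)$, and the identical computation with $K$ gives $a) \Leftrightarrow c)$. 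For the $J$-case I must handle the fact that $J(\ker F_*) = \ker F_*$, so here instead I use $g_M([X,Y],V) = g_M([X,Y],JJ V)$ combined with $J$-invariance of $(\ker F_*)^\perp$ (Remark on the h-Lagrangian basis) to reduce condition d) to a) as well.

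The main obstacle I anticipate is bookkeeping the horizontal/vertical placement of each term correctly so that the O'Neill identities \eqref{eq: 2-71}--\eqref{eq: 2-72} apply, and in particular confirming that the mixed terms $g_M(C_R Y, R\mathcal{A}_X V)$ genuinely vanish rather than merely simplifying—for $R \in \{I,K\}$ this follows from $C_R = 0$, but the $J$-case requires the slightly different identity $JV \in \Gamma(\ker F_*)$ together with $\mathcal{A}_X V \in \Gamma((\ker F_*)^\perp)$, so the symmetry must be rederived rather than copied. Once the placement is verified, each equivalence reduces to the nondegeneracy of $g_M$ on $(\ker F_*)^\perp$ and the bijectivity of $R$, and the result follows.
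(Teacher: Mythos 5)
Your overall route is the same as the paper's: specialize the computation of Theorem~\ref{thm: 3-1} to $R=I,K$, where $I(\ker F_*)=(\ker F_*)^{\perp}$ forces $C_I X=C_K X=0$ and $B_I X=IX$, giving $g_M([X,Y],V)=g_M(\mathcal{A}_X IY-\mathcal{A}_Y IX,\, IV)$, and then treat $R=J$ separately using $J(\ker F_*)=\ker F_*$. However, there is a genuine error at your key step: you assert that $\mathcal{A}_X IY-\mathcal{A}_Y IX$ is \emph{vertical} ``by the definition (\ref{eq: 2-3}) of $\mathcal{A}$.'' It is the opposite. Since $X$ is horizontal and $IY$ is vertical, (\ref{eq: 2-3}) gives $\mathcal{A}_X IY=\mathcal{H}\nabla_X IY$, a \emph{horizontal} vector (the tensor $\mathcal{A}_X$ interchanges the two distributions). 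This is not a cosmetic slip: if the difference were vertical, its inner product with the horizontal vector $IV$ would vanish identically, so your displayed formula would read $g_M([X,Y],V)=0$ for all $X,Y,V$, i.e. $(\ker F_*)^{\perp}$ would always be integrable and the lemma would be vacuous. The correct justification of $a)\Leftrightarrow b)$ is: the difference is horizontal, and since $I(\ker F_*)=(\ker F_*)^{\perp}$ the vectors $IV$, $V\in\Gamma(\ker F_*)$, span the horizontal distribution, so vanishing of $g_M(\mathcal{A}_X IY-\mathcal{A}_Y IX,\, IV)$ for all $V$ forces $\mathcal{A}_X IY=\mathcal{A}_Y IX$. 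The appeal to (\ref{eq: 2-72}) is unnecessary.

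Your $J$-case also has a sign error: $g_M([X,Y],JJV)=-g_M([X,Y],V)$ because $J^2=-id$, so the identity you propose is false as written. What you want is $g_M([X,Y],V)=g_M(J[X,Y],JV)$ (compatibility of $g_M$ with $J$), then $J[X,Y]=\nabla_X JY-\nabla_Y JX$ ($J$ parallel, $\nabla$ torsion-free). Here $JX,JY$ remain horizontal, so the relevant vertical parts are $\mathcal{A}_X JY=\mathcal{V}\nabla_X JY$ and $\mathcal{A}_Y JX$ (note that in this case, unlike the $I,K$ cases, the difference \emph{is} vertical), and pairing against $JV$, which sweeps out all of $\ker F_*$, gives $a)\Leftrightarrow d)$. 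Up to sign conventions this is exactly the paper's computation $g_M([X,Y],JV)=g_M(\mathcal{A}_Y JX-\mathcal{A}_X JY,\, V)$. With these two repairs your argument coincides with the paper's proof; as it stands, the horizontal/vertical bookkeeping you yourself flagged as the main obstacle is precisely where the argument breaks.
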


\begin{proof}
By the proof of Theorem \ref{thm: 3-1}, we get $a) \Leftrightarrow b)$ and $a) \Leftrightarrow c)$.

Given $V\in \Gamma(\ker F_*)$ and $X,Y\in \Gamma((\ker F_*)^{\perp})$, since $J(\ker F_*) = \ker F_*$, we obtain
\begin{align*}
g_M([X,Y], JV) &= -g_M(\nabla_X JY - \nabla_Y JX, V)    \\
      &= g_M(\mathcal{A}_Y JX - \mathcal{A}_X JY, V),
\end{align*}
which implies $a) \Leftrightarrow d)$.

Therefore, the result follows.
\end{proof}

We consider the equivalent conditions for distributions to be totally geodesic.

\begin{theorem}\label{thm: 3-2}
Let $F$ be a h-anti-invariant submersion from a hyperk\"{a}hler
manifold $(M,I,J,K,g_M)$ onto a Riemannian manifold $(N, g_N)$
such that $(I,J,K)$ is a h-anti-invariant basis.
Then the following conditions are equivalent:

a) the distribution $(\ker F_*)^{\perp}$ defines a totally geodesic foliation on $M$.

b)
$$
g_M(\mathcal{A}_X B_I Y, IV) = g_M(C_I Y, I\mathcal{A}_X V)
$$
for $V\in \Gamma(\ker F_*)$ and $X,Y\in \Gamma((\ker F_*)^{\perp})$.

c)
$$
g_M(\mathcal{A}_X B_J Y, JV) = g_M(C_J Y, J\mathcal{A}_X V)
$$
for $V\in \Gamma(\ker F_*)$ and $X,Y\in \Gamma((\ker F_*)^{\perp})$.

d)
$$
g_M(\mathcal{A}_X B_K Y, KV) = g_M(C_K Y, K\mathcal{A}_X V)
$$
for $V\in \Gamma(\ker F_*)$ and $X,Y\in \Gamma((\ker F_*)^{\perp})$.
\end{theorem}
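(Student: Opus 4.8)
The plan is to mimic the computation used in Theorem~\ref{thm: 3-1}, but now characterizing when the fibers of the foliation defined by $(\ker F_*)^{\perp}$ are totally geodesic rather than merely when the distribution is integrable. By definition, $(\ker F_*)^{\perp}$ defines a totally geodesic foliation on $M$ precisely when $\nabla_X Y \in \Gamma((\ker F_*)^{\perp})$ for all $X, Y \in \Gamma((\ker F_*)^{\perp})$, which is equivalent to $g_M(\nabla_X Y, V) = 0$ for every $V \in \Gamma(\ker F_*)$. So the whole theorem reduces to re-expressing $g_M(\nabla_X Y, V)$ in terms of the O'Neill tensor $\mathcal{A}$ and the operators $B_R, C_R$.

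First I would fix $R \in \{I,J,K\}$ and rewrite $g_M(\nabla_X Y, V)$ by inserting the compatible structure. Since $R$ is parallel (hyperk\"ahler) and an isometry, $g_M(\nabla_X Y, V) = g_M(R \nabla_X Y, R V) = g_M(\nabla_X RY, RV)$. Splitting $RY = B_R Y + C_R Y$ via (\ref{eq: 3-1}) gives
$$
g_M(\nabla_X Y, V) = g_M(\nabla_X B_R Y, RV) + g_M(\nabla_X C_R Y, RV).
$$
For the first term, $B_R Y \in \Gamma(\ker F_*)$ and $RV \in \Gamma((\ker F_*)^{\perp})$, so by the definition (\ref{eq: 2-3}) of $\mathcal{A}$ one gets $g_M(\nabla_X B_R Y, RV) = g_M(\mathcal{A}_X B_R Y, RV)$. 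For the second term I would move the derivative using metric compatibility and $g_M(C_R Y, RV) = 0$ from (\ref{eq: 3-5}), obtaining $g_M(\nabla_X C_R Y, RV) = -g_M(C_R Y, \nabla_X RV)$; then, exactly as in the proof of Theorem~\ref{thm: 3-1}, $\nabla_X RV = R \nabla_X V$ (parallelism of $R$) and its projection against $C_R Y \in \Gamma(\mu_R)$ picks out only $R \mathcal{A}_X V$, yielding $-g_M(C_R Y, R\mathcal{A}_X V)$. Collecting these,
$$
g_M(\nabla_X Y, V) = g_M(\mathcal{A}_X B_R Y, IV) - g_M(C_R Y, R\mathcal{A}_X V),
$$
written with $R$ in place to cover the three cases b), c), d) verbatim. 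Setting this to zero for all $V, X, Y$ is exactly each stated identity, so $a) \Leftrightarrow b)$, $a) \Leftrightarrow c)$, $a) \Leftrightarrow d)$.

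The one point requiring a little care—and the only potential obstacle—is justifying $g_M(\nabla_X RV, C_R Y) = g_M(R\mathcal{A}_X V, C_R Y)$ cleanly. Writing $\nabla_X RV = R\nabla_X V = R(\mathcal{A}_X V + \mathcal{V}\nabla_X V)$ via (\ref{eq: 2-3}), the component $R\,\mathcal{V}\nabla_X V$ lies in $R(\ker F_*) \subset (\ker F_*)^{\perp}$ and is orthogonal to $C_R Y \in \Gamma(\mu_R)$ by the definition (\ref{eq: 3-2}) of $\mu_R$ as the $R$-complement, so only the $R\mathcal{A}_X V$ term survives. This is precisely the same step already performed in Theorem~\ref{thm: 3-1}, so I would simply invoke that computation and specialize it by dropping the antisymmetrized $X \leftrightarrow Y$ terms, since the totally geodesic condition tests a single derivative $\nabla_X Y$ rather than the bracket $[X,Y]$.
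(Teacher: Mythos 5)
Your proposal is correct and follows essentially the same route as the paper: insert the parallel structure $R$ to write $g_M(\nabla_X Y, V) = g_M(\nabla_X B_R Y + \nabla_X C_R Y, RV)$, identify the first term with $g_M(\mathcal{A}_X B_R Y, RV)$, and integrate by parts on the second using (\ref{eq: 3-5}) together with $\nabla_X RV = R\nabla_X V$ and the orthogonality of $R(\ker F_*)$ and $\mu_R$ to obtain $g_M(\mathcal{A}_X B_R Y, RV) - g_M(C_R Y, R\mathcal{A}_X V)$, exactly as in the paper's proof. The only blemish is a typo in your collected formula, where $g_M(\mathcal{A}_X B_R Y, IV)$ should read $g_M(\mathcal{A}_X B_R Y, RV)$, as your surrounding text makes clear.
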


\begin{proof}
Given $V\in \Gamma(\ker F_*)$, $X,Y\in \Gamma((\ker F_*)^{\perp})$, and $R\in \{ I,J,K \}$, by using (\ref{eq: 3-5}), we have
\begin{align*}
g_M(\nabla_X Y, V) &= g_M(\nabla_X B_R Y + \nabla_X C_R Y, RV)    \\
      &= g_M(\mathcal{A}_X B_R Y, RV) - g_M(C_R Y, \nabla_X RV)   \\
      &= g_M(\mathcal{A}_X B_R Y, RV) - g_M(C_R Y, R\mathcal{A}_X V),
\end{align*}
which implies $a) \Leftrightarrow b)$, $a) \Leftrightarrow c)$, $a) \Leftrightarrow d)$.

Therefore, the result follows.
\end{proof}

\begin{lemma}\label{lem: 3-2}
Let $F$ be a h-Lagrangian submersion from a hyperk\"{a}hler
manifold $(M,I,J,K,g_M)$ onto a Riemannian manifold $(N, g_N)$
such that $(I,J,K)$ is a h-Lagrangian basis. Then the following
conditions are equivalent:

a) the distribution $(\ker F_*)^{\perp}$ defines a totally geodesic foliation on $M$.

b) $\mathcal{A}_X IY = 0$ for $X,Y\in \Gamma((\ker F_*)^{\perp})$.

c) $\mathcal{A}_X KY = 0$ for $X,Y\in \Gamma((\ker F_*)^{\perp})$.

d) $\mathcal{A}_X JY = 0$ for $X,Y\in \Gamma((\ker F_*)^{\perp})$.
\end{lemma}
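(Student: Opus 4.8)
The plan is to follow the template of the proof of Theorem \ref{thm: 3-2}, exploiting the fact that in the h-Lagrangian situation the tensors $B_R$ and $C_R$ degenerate. Recall that condition a) means precisely that $g_M(\nabla_X Y, V) = 0$ for all $X, Y \in \Gamma((\ker F_*)^{\perp})$ and $V \in \Gamma(\ker F_*)$, so throughout I would fix such $X, Y, V$ and compute $g_M(\nabla_X Y, V)$ by transporting it through each of the parallel structures $I, J, K$ (all parallel since $M$ is hyperk\"{a}hler).

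For the equivalences a) $\Leftrightarrow$ b) and a) $\Leftrightarrow$ c) I would invoke directly the identity established inside the proof of Theorem \ref{thm: 3-2}, namely $g_M(\nabla_X Y, V) = g_M(\mathcal{A}_X B_R Y, RV) - g_M(C_R Y, R\mathcal{A}_X V)$ for $R \in \{ I,J,K \}$. For $R = I$ (respectively $R = K$) the h-Lagrangian condition $I((\ker F_*)^{\perp}) = \ker F_*$ forces $IY \in \Gamma(\ker F_*)$, whence $B_I Y = IY$ and $C_I Y = 0$; the identity collapses to $g_M(\nabla_X Y, V) = g_M(\mathcal{A}_X IY, IV)$, and symmetrically with $K$. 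It then remains only to pass from the vanishing of this pairing for every $V$ to the vanishing of $\mathcal{A}_X IY$ itself.

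For a) $\Leftrightarrow$ d) the diagonal structure $J(\ker F_*) = \ker F_*$ blocks the $B_R$/$C_R$ decomposition, so I would argue directly: since $\nabla J = 0$ and $JV \in \Gamma(\ker F_*)$, $JY \in \Gamma((\ker F_*)^{\perp})$, I would write $g_M(\nabla_X Y, V) = g_M(\nabla_X JY, JV)$ and recognize the right-hand side as $g_M(\mathcal{A}_X JY, JV)$, using that $\mathcal{A}_X JY = \mathcal{V}\nabla_X JY$ for horizontal $X, JY$ together with the verticality of $JV$. As before I would then strip off the test vector $JV$.

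The only genuinely non-formal point, and the step I would state with care, is precisely this removal of the test vector. In each case $\mathcal{A}_X(\cdot)$ lands in a single distribution, namely $(\ker F_*)^{\perp}$ for $R \in \{ I,K \}$ and $\ker F_*$ for $R = J$, while $RV$ sweeps out that entire distribution as $V$ ranges over $\ker F_*$, exactly because $R$ restricts to a linear isomorphism between $\ker F_*$ and the relevant distribution. Hence $g_M(\mathcal{A}_X RY, RV) = 0$ for all $V$ is equivalent to $\mathcal{A}_X RY = 0$, which upgrades the pointwise pairing identities to the tensor identities b), c), d) and closes all three equivalences.
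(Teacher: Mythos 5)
Your proposal is correct and takes essentially the same route as the paper's proof: the paper also settles a) $\Leftrightarrow$ b) and a) $\Leftrightarrow$ c) by citing the identity from the proof of Theorem \ref{thm: 3-2} (with the degeneration $B_IY = IY$, $C_IY = 0$ left implicit), and handles a) $\Leftrightarrow$ d) by the same $J$-transport computation, written there as $g_M(\nabla_X Y, JV) = -g_M(\nabla_X JY, V) = -g_M(\mathcal{A}_X JY, V)$, which differs from yours only in which side of the pairing carries $J$. Your explicit justification for stripping off the test vector $RV$ (that $\mathcal{A}_X RY$ lies in a single distribution which $RV$ sweeps out) merely spells out a step the paper leaves tacit.
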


\begin{proof}
By the proof of Theorem \ref{thm: 3-2}, we get $a) \Leftrightarrow b)$ and $a) \Leftrightarrow c)$.

Given $V\in \Gamma(\ker F_*)$ and $X,Y\in \Gamma((\ker F_*)^{\perp})$, we obtain
\begin{align*}
g_M(\nabla_X Y, JV) &= -g_M(\nabla_X JY, V)    \\
      &= -g_M(\mathcal{A}_X JY, V),
\end{align*}
which implies $a) \Leftrightarrow d)$.

Therefore, the result follows.
\end{proof}

\begin{theorem}\label{thm: 3-3}
Let $F$ be a h-anti-invariant submersion from a hyperk\"{a}hler
manifold $(M,I,J,K,g_M)$ onto a Riemannian manifold $(N, g_N)$
such that $(I,J,K)$ is a h-anti-invariant basis.
Then the following conditions are equivalent:

a) the distribution $\ker F_*$ defines a totally geodesic foliation on $M$.

b)
$$
\mathcal{T}_V B_I X + \mathcal{A}_{C_I X} V\in \Gamma(\mu_I)
$$
for $V\in \Gamma(\ker F_*)$ and $X\in \Gamma((\ker F_*)^{\perp})$.

c)
$$
\mathcal{T}_V B_J X + \mathcal{A}_{C_J X} V\in \Gamma(\mu_J)
$$
for $V\in \Gamma(\ker F_*)$ and $X\in \Gamma((\ker F_*)^{\perp})$.

d)
$$
\mathcal{T}_V B_K X + \mathcal{A}_{C_K X} V\in \Gamma(\mu_K)
$$
for $V\in \Gamma(\ker F_*)$ and $X\in \Gamma((\ker F_*)^{\perp})$.
\end{theorem}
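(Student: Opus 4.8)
The plan is to reduce condition a) to the single pointwise statement that $g_M(\nabla_V W, X)=0$ for all $V,W\in\Gamma(\ker F_*)$ and $X\in\Gamma((\ker F_*)^{\perp})$ — this is precisely what it means for the fibres to define a totally geodesic foliation, since $\mathcal{T}_V W=\mathcal{H}\nabla_V W$ is the obstruction — and then to rewrite $g_M(\nabla_V W,X)$ in the exact shape demanded by b), c), d), exploiting that $I,J,K$ are parallel. The computation will closely parallel that of Theorem \ref{thm: 3-2}.

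First I would use metric compatibility together with $g_M(W,X)=0$ to move the derivative off $W$: $g_M(\nabla_V W,X)=-g_M(W,\nabla_V X)$. Then, fixing $R\in\{I,J,K\}$, I would insert $R$ through the isometry property and push it past the connection via $\nabla R=0$ (the hyperk\"ahler hypothesis), obtaining $-g_M(RW,\nabla_V(RX))$. Splitting $RX=B_R X+C_R X$ as in \eqref{eq: 3-1} breaks this into a piece $\nabla_V B_R X$, with $B_R X$ vertical, and a piece $\nabla_V C_R X$, with $C_R X$ horizontal. Since $RW\in\Gamma(R(\ker F_*))\subset\Gamma((\ker F_*)^{\perp})$ is horizontal, only the horizontal components of these derivatives survive the pairing against $RW$, and these are exactly $\mathcal{T}_V B_R X=\mathcal{H}\nabla_V B_R X$ and $\mathcal{A}_{C_R X}V=\mathcal{H}\nabla_V C_R X$. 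This should produce, for each $R\in\{I,J,K\}$, the identity
\[
g_M(\nabla_V W,X)=-g_M(\mathcal{T}_V B_R X+\mathcal{A}_{C_R X}V,\,RW).
\]

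With this identity the equivalences are formal. Condition a) holds iff $g_M(\nabla_V W,X)=0$ for all admissible $V,W,X$, which by the identity is equivalent to $g_M(\mathcal{T}_V B_R X+\mathcal{A}_{C_R X}V,RW)=0$ for all $V,W,X$. As the vector $\mathcal{T}_V B_R X+\mathcal{A}_{C_R X}V$ is horizontal and $(\ker F_*)^{\perp}=R(\ker F_*)\oplus\mu_R$ by \eqref{eq: 3-2}, with $RW$ ranging over all of $R(\ker F_*)$, this vanishing says exactly that the vector lies in $\mu_R$. Taking $R=I$ yields a)$\Leftrightarrow$b), $R=J$ yields a)$\Leftrightarrow$c), and $R=K$ yields a)$\Leftrightarrow$d).

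The step I expect to be the main obstacle is the identification of the horizontal part of $\nabla_V C_R X$ with the O'Neill tensor $\mathcal{A}_{C_R X}V$: this is where one must invoke the relation $\mathcal{H}\nabla_V Y=\mathcal{A}_Y V$ for horizontal $Y$ and vertical $V$, and care is needed to verify that the leftover contributions drop out against $RW$ — namely that the vertical ($\widehat{\nabla}$-type) parts vanish because $RW$ is horizontal, and that the components lying in $R(\ker F_*)$ as opposed to $\mu_R$ are correctly separated, exactly as the analogous $R\mathcal{V}\nabla$ and $C_R$ terms were discarded in the proof of Theorem \ref{thm: 3-2} using $\mu_R\perp R(\ker F_*)$. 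Once that horizontal term is handled cleanly, the remaining packaging is identical to the earlier arguments.
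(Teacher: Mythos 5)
Your strategy coincides with the paper's: reduce a) to the vanishing of $g_M(\nabla_V W,X)$, insert $R$ using $\nabla R=0$, split $RX=B_RX+C_RX$ as in (\ref{eq: 3-1}), peel off the vertical piece as $-g_M(RW,\mathcal{T}_V B_RX)$, and close the argument with the splitting (\ref{eq: 3-2}). All of that matches the paper and is sound. The gap is exactly at the step you flagged: you justify $\mathcal{H}\nabla_V C_RX=\mathcal{A}_{C_RX}V$ by ``the relation $\mathcal{H}\nabla_V Y=\mathcal{A}_Y V$ for horizontal $Y$ and vertical $V$''. No such relation holds for an arbitrary horizontal field $Y$: by definition $\mathcal{A}_Y V=\mathcal{H}\nabla_Y V$ (the derivative is taken \emph{along} $Y$), so in general $\mathcal{H}\nabla_V Y=\mathcal{A}_Y V+\mathcal{H}[V,Y]$, and the standard lemma asserting $\mathcal{H}\nabla_V Y=\mathcal{A}_Y V$ requires $Y$ to be \emph{basic}. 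The field $C_RX$ is not basic in general, so your computation acquires the extra term $g_M(RW,\mathcal{H}[V,C_RX])$, and neither of the discarding mechanisms you invoke (vertical pieces dying against the horizontal $RW$; $\mu_R\perp R(\ker F_*)$) touches it: $\mathcal{H}[V,C_RX]$ is horizontal and has no a priori reason to lie in $\mu_R$. This is precisely where Theorem \ref{thm: 3-3} is harder than Theorem \ref{thm: 3-2}: in the latter every covariant derivative is taken in a \emph{horizontal} direction, so each discarded piece really is killed by orthogonality, whereas here one must differentiate the horizontal field $C_RX$ in the \emph{vertical} direction $V$, and that object is not computed by either O'Neill tensor.

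What the paper does at this step --- and what your proposal is missing --- is to swap which argument is differentiated, using the second fundamental form of $F$: since $RW$ is horizontal, $g_M(RW,\nabla_V C_RX)=g_N(F_*RW,F_*\nabla_V C_RX)$; the symmetry (\ref{eq: 2-8}) gives $(\nabla F_*)(V,C_RX)=(\nabla F_*)(C_RX,V)$, and since $F_*V=0$ the latter equals $-F_*(\nabla_{C_RX}V)$, so the pairing becomes $g_M(RW,\nabla_{C_RX}V)=g_M(RW,\mathcal{A}_{C_RX}V)$, where now the derivative runs along the horizontal direction $C_RX$ and $\mathcal{A}$ is the correct tensor. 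That symmetry argument is the substitute for basicness; without it (or a direct proof that $g_M(RW,\mathcal{H}[V,C_RX])=0$) your chain of equalities does not close. For fairness I note the point is delicate even in the paper: its line $g_N(F_*RW,F_*\nabla_V C_RX)=-g_N(F_*RW,(\nabla F_*)(V,C_RX))$ silently drops the term $g_N(F_*RW,\nabla^F_V F_*(C_RX))$, which is $g_M(RW,\mathcal{H}[V,C_RX])$ in disguise; this presentation is standard in this literature, but it shows that the bracket term is the real obstacle at this step, not the $R\mathcal{V}\nabla$- and $C_R$-type leftovers your proposal lists.
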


\begin{proof}
Given $V,W\in \Gamma(\ker F_*)$, $X\in \Gamma((\ker F_*)^{\perp})$, and $R\in \{ I,J,K \}$, by using (\ref{eq: 3-5}), we get
\begin{align*}
g_M(\nabla_V W, X) &= g_M(\nabla_V RW, RX)    \\
      &= -g_M(RW, \nabla_V B_R X + \nabla_V C_R X)   \\
      &= -g_M(RW, \mathcal{T}_V B_R X) - g_M(RW, \nabla_V C_R X).
\end{align*}
But
\begin{align*}
g_M(RW, \nabla_V C_R X)
      &= g_N(F_* RW, F_* \nabla_V C_R X) \ (\text{since} \ RW\in \Gamma((\ker F_*)^{\perp}))    \\
      &= -g_N(F_* RW, (\nabla F_*)(V, C_R X))    \\
      &= -g_N(F_* RW, (\nabla F_*)(C_R X, V)) \ (\text{by} \ (\ref{eq: 2-8}))   \\
      &= g_M(RW, \nabla_{C_R X} V)  \\
      &= g_M(RW, \mathcal{A}_{C_R X} V).
\end{align*}
Hence,
$$
g_M(\nabla_V W, X) = -g_M(RW, \mathcal{T}_V B_R X + \mathcal{A}_{C_R X} V),
$$
which implies $a) \Leftrightarrow b)$, $a) \Leftrightarrow c)$, $a) \Leftrightarrow d)$.

Therefore, we obtain the result.
\end{proof}

\begin{lemma}\label{lem: 3-3}
Let $F$ be a h-Lagrangian submersion from a hyperk\"{a}hler
manifold $(M,I,J,K,g_M)$ onto a Riemannian manifold $(N, g_N)$
such that $(I,J,K)$ is a h-Lagrangian basis. Then the following
conditions are equivalent:

a) the distribution $\ker F_*$ defines a totally geodesic foliation on $M$.

b) $\mathcal{T}_V IX = 0$ for $X\in \Gamma((\ker F_*)^{\perp})$ and  $V\in \Gamma(\ker F_*)$.

c) $\mathcal{T}_V KX = 0$ for $X\in \Gamma((\ker F_*)^{\perp})$ and  $V\in \Gamma(\ker F_*)$.

d) $\mathcal{T}_V JX = 0$ for $X\in \Gamma((\ker F_*)^{\perp})$ and  $V\in \Gamma(\ker F_*)$.
\end{lemma}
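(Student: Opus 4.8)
The plan is to mirror the proof of Theorem \ref{thm: 3-3}, exploiting that $(M,I,J,K,g_M)$ is hyperk\"{a}hler, so that each of $I,J,K$ is parallel for the Levi-Civita connection and preserves $g_M$. Recall that $\ker F_*$ defines a totally geodesic foliation precisely when $g_M(\nabla_V W, X)=0$ for all $V,W\in\Gamma(\ker F_*)$ and $X\in\Gamma((\ker F_*)^{\perp})$; so the whole argument reduces to producing, for each $R\in\{I,J,K\}$, an identity expressing $g_M(\nabla_V W,X)$ through the relevant $\mathcal{T}$-term.

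For $R=I$ (and identically for $R=K$) I would use the h-Lagrangian relation $I(\ker F_*)=(\ker F_*)^{\perp}$, so that $IW$ is horizontal and $IX$ is vertical. Then $g_M(\nabla_V W,X)=g_M(\nabla_V IW, IX)$ by parallelism and metric compatibility; differentiating $g_M(IW,IX)=g_M(W,X)=0$ along $V$ gives $g_M(\nabla_V IW, IX)=-g_M(IW,\nabla_V IX)$. Since $IX$ is vertical, only the horizontal part $\mathcal{T}_V IX=\mathcal{H}\nabla_V IX$ survives against the horizontal vector $IW$, yielding
$$
g_M(\nabla_V W, X)=-g_M(IW,\mathcal{T}_V IX).
$$
Because $W\mapsto IW$ is a fiberwise isomorphism $\ker F_*\to(\ker F_*)^{\perp}$, the vectors $IW$ exhaust the horizontal distribution, so the left side vanishes identically iff $\mathcal{T}_V IX=0$; this gives a)$\Leftrightarrow$b), and verbatim with $K$ in place of $I$ gives a)$\Leftrightarrow$c).

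The case $R=J$ is the mild variant, analogous to the treatment of $J$ in Lemma \ref{lem: 3-2}: here $J(\ker F_*)=\ker F_*$, so $JW$ is vertical and $JX$ is horizontal. The same differentiation of $g_M(JW,JX)=0$ produces $g_M(\nabla_V W,X)=-g_M(JW,\nabla_V JX)$, and now it is the vertical part $\mathcal{T}_V JX=\mathcal{V}\nabla_V JX$ that pairs nontrivially with the vertical vector $JW$, so that
$$
g_M(\nabla_V W, X)=-g_M(JW,\mathcal{T}_V JX).
$$
Since $W\mapsto JW$ is a fiberwise isomorphism of $\ker F_*$ onto itself, this vanishes identically iff $\mathcal{T}_V JX=0$, giving a)$\Leftrightarrow$d).

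There is no real obstacle here; the computation is routine once parallelism is invoked. The only points demanding care are bookkeeping ones: tracking, for each $R$, which of $RW,RX$ lands in $\ker F_*$ versus $(\ker F_*)^{\perp}$ (this flips between the $I,K$ cases and the $J$ case), and correctly identifying $\mathcal{T}_V RX$ as the horizontal part of $\nabla_V RX$ when $RX$ is vertical but as the vertical part when $RX$ is horizontal. The surjectivity of $R$ onto the relevant distribution is exactly what upgrades each pointwise identity to the stated equivalence.
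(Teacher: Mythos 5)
Your proof is correct and takes essentially the same approach as the paper's. For the $I$ and $K$ cases the paper simply cites the proof of Theorem~\ref{thm: 3-3}, which in the h-Lagrangian setting (where $C_I X = C_K X = 0$ and $\mu_I = \mu_K = \{0\}$) collapses to exactly your identity $g_M(\nabla_V W, X) = -g_M(RW, \mathcal{T}_V RX)$; for the $J$ case the paper's computation $g_M(\nabla_V W, JX) = -g_M(W, \mathcal{T}_V JX)$ differs from yours only in substituting $JX$ for $X$ rather than applying the parallel $J$ to both slots of the inner product.
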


\begin{proof}
By the proof of Theorem \ref{thm: 3-3}, we have $a) \Leftrightarrow b)$ and $a) \Leftrightarrow c)$.

Given $V,W\in \Gamma(\ker F_*)$ and $X\in \Gamma((\ker F_*)^{\perp})$, we get
\begin{align*}
g_M(\nabla_V W, JX) &= -g_M(W, \nabla_V JX)    \\
      &= -g_M(W, \mathcal{T}_V JX)  \ (\text{since} \ JX\in \Gamma((\ker F_*)^{\perp})),
\end{align*}
which implies $a) \Leftrightarrow d)$.

Therefore, the result follows.
\end{proof}

Now, we consider the equivalent conditions for such maps to be either totally geodesic or harmonic.

\begin{theorem}\label{thm: 3-4}
Let $F$ be a h-anti-invariant submersion from a hyperk\"{a}hler
manifold $(M,I,J,K,g_M)$ onto a Riemannian manifold $(N, g_N)$
such that $(I,J,K)$ is a h-anti-invariant basis.
Then the following conditions are equivalent:

a) the map $F$ is a totally geodesic map.

b)
$$
\mathcal{A}_X IV = 0, \ Q_I \mathcal{H}\nabla_X IV = 0, \ \mathcal{T}_V IW = 0, \ Q_I \mathcal{H}\nabla_V IW = 0
$$
for $V,W\in \Gamma(\ker F_*)$ and $X\in \Gamma((\ker F_*)^{\perp})$.

c)
$$
\mathcal{A}_X JV = 0, \ Q_J \mathcal{H}\nabla_X JV = 0, \ \mathcal{T}_V JW = 0, \ Q_J \mathcal{H}\nabla_V JW = 0
$$
for $V,W\in \Gamma(\ker F_*)$ and $X\in \Gamma((\ker F_*)^{\perp})$.

d)
$$
\mathcal{A}_X KV = 0, \ Q_K \mathcal{H}\nabla_X KV = 0, \ \mathcal{T}_V KW = 0, \ Q_K \mathcal{H}\nabla_V KW = 0
$$
for $V,W\in \Gamma(\ker F_*)$ and $X\in \Gamma((\ker F_*)^{\perp})$.
\end{theorem}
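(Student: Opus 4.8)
The plan is to reduce the totally geodesic condition to the vanishing of the two O'Neill-type obstructions $\mathcal{A}_X V$ and $\mathcal{T}_V W$, and then to repackage these in terms of the quantities appearing in b), c), d) by means of the preceding Lemma.

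First I would invoke Proposition \ref{prop: 2-1}. Since $(\nabla F_*)$ is symmetric by (\ref{eq: 2-8}) and vanishes on $(\ker F_*)^{\perp}\times (\ker F_*)^{\perp}$ by (\ref{eq: 2-9}), the map $F$ is totally geodesic if and only if $(\nabla F_*)(X,V)=0$ and $(\nabla F_*)(V,W)=0$ for all $V,W\in \Gamma(\ker F_*)$ and $X\in \Gamma((\ker F_*)^{\perp})$. Because $F_* V=0$ for vertical $V$, unwinding the definition of the pullback connection gives $(\nabla F_*)(X,V)=-F_*(\mathcal{H}\nabla_X V)=-F_*(\mathcal{A}_X V)$ and $(\nabla F_*)(V,W)=-F_*(\mathcal{H}\nabla_V W)=-F_*(\mathcal{T}_V W)$. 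Since $F_*$ restricts to a linear isometry on $(\ker F_*)^{\perp}$ by (\ref{eq: 2-1}), and $\mathcal{A}_X V$, $\mathcal{T}_V W$ are horizontal, these vanish precisely when $\mathcal{A}_X V=0$ and $\mathcal{T}_V W=0$. Thus a) holds if and only if $\mathcal{A}_X V=0$ and $\mathcal{T}_V W=0$ for all such $V,W,X$.

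Next I would translate these two conditions, for a fixed $R\in \{I,J,K\}$, into the four conditions listed in the corresponding item. Parts (1) and (3) of the preceding Lemma give $\mathcal{A}_X RV=B_R\mathcal{A}_X V$, $\mathcal{H}\nabla_X RV=C_R\mathcal{A}_X V+R\mathcal{V}\nabla_X V$, $\mathcal{T}_V RW=B_R\mathcal{T}_V W$, and $\mathcal{H}\nabla_V RW=C_R\mathcal{T}_V W+R\widehat{\nabla}_V W$. The key observation is that $R\mathcal{V}\nabla_X V$ and $R\widehat{\nabla}_V W$ lie in $R(\ker F_*)$, hence are annihilated by $Q_R$, while $C_R\mathcal{A}_X V,\, C_R\mathcal{T}_V W\in \Gamma(\mu_R)$ by (\ref{eq: 3-4}) are fixed by $Q_R$. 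Therefore $Q_R\mathcal{H}\nabla_X RV=C_R\mathcal{A}_X V$ and $Q_R\mathcal{H}\nabla_V RW=C_R\mathcal{T}_V W$.

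Finally I would combine the $B_R$ and $C_R$ parts. Since $R\mathcal{A}_X V=B_R\mathcal{A}_X V+C_R\mathcal{A}_X V$ is just the decomposition (\ref{eq: 3-1}) applied to the horizontal vector $\mathcal{A}_X V$, the simultaneous vanishing of $\mathcal{A}_X RV$ (the $B_R$ part) and of $Q_R\mathcal{H}\nabla_X RV=C_R\mathcal{A}_X V$ (the $C_R$ part) is equivalent to $R\mathcal{A}_X V=0$, hence to $\mathcal{A}_X V=0$ because $R$ is invertible; the same argument with $\mathcal{T}_V W$ in place of $\mathcal{A}_X V$ handles the vertical case. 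Reading this off for $R=I$, $R=J$, $R=K$ yields a) $\Leftrightarrow$ b), a) $\Leftrightarrow$ c), a) $\Leftrightarrow$ d). I expect the only delicate point to be the bookkeeping of projections: one must check that $Q_R$ isolates exactly the $C_R$-component and that the cross terms $R\mathcal{V}\nabla_X V$, $R\widehat{\nabla}_V W$ leave no residue, which is precisely where the anti-invariance $R(\ker F_*)\subset (\ker F_*)^{\perp}$ and the splitting (\ref{eq: 3-2}) enter.
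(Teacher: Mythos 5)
Your proof is correct, and it takes a mildly but genuinely different route from the paper's. The paper proves each equivalence by a direct inline computation: it writes $(\nabla F_*)(X,V) = -F_*(\nabla_X V) = F_*(R\nabla_X RV)$ using $\nabla R = 0$ and $R^2 = -\mathrm{id}$, decomposes $\nabla_X RV$ into $\mathcal{A}_X RV + \mathcal{H}\nabla_X RV$, and observes that $F_*\circ R$ annihilates exactly the $P_R$-component of the horizontal part (which $R$ sends back into $\ker F_*$), leaving the pair of conditions $\mathcal{A}_X RV = 0$, $Q_R\mathcal{H}\nabla_X RV = 0$; the same is done for $(\nabla F_*)(V,W)$. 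You instead factor the argument through the $R$-independent criterion that $F$ is totally geodesic if and only if $\mathcal{A}_X V = 0$ and $\mathcal{T}_V W = 0$ (the standard O'Neill-tensor characterization, which uses nothing quaternionic), and then invoke parts (1) and (3) of the preceding Lemma, together with the projection identities $Q_R\mathcal{H}\nabla_X RV = C_R\mathcal{A}_X V$ and $Q_R\mathcal{H}\nabla_V RW = C_R\mathcal{T}_V W$, to identify each of b), c), d) with that criterion via $B_R\mathcal{A}_X V = C_R\mathcal{A}_X V = 0 \Leftrightarrow R\mathcal{A}_X V = 0 \Leftrightarrow \mathcal{A}_X V = 0$ (and likewise for $\mathcal{T}_V W$). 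The underlying computation is the same --- the Lemma's proof is precisely what the paper redoes inline --- but your packaging buys two things: it reuses the stated Lemma instead of repeating its derivation, and it makes transparent why b), c), d) are mutually equivalent, namely because each reduces to the single $R$-free condition. What the paper's version buys is self-containedness: it never needs the Lemma, and its computation pattern matches the proofs of the neighboring theorems in the section. Your bookkeeping of the projections ($Q_R$ kills $R(\ker F_*)$ and fixes $\mu_R$, while the horizontal vectors $\mathcal{A}_X V$, $\mathcal{T}_V W$ split under $R$ into a $B_R$-part in $\ker F_*$ and a $C_R$-part in $\mu_R$) is exactly right, so there is no gap.
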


\begin{proof}
By (\ref{eq: 2-9}), we have $(\nabla F_*)(X, Y) = 0$ for $X,Y\in \Gamma((\ker F_*)^{\perp})$.

Given $V,W\in \Gamma(\ker F_*)$, $X\in \Gamma((\ker F_*)^{\perp})$, and $R\in \{ I,J,K \}$, by using (\ref{eq: 3-2}) and (\ref{eq: 3-3}),
we obtain
\begin{align*}
(\nabla F_*)(X, V) &= -F_* (\nabla_X V)    \\
      &= F_* (R\nabla_X RV)   \\
      &= F_* (R(\mathcal{A}_X RV + \mathcal{H}\nabla_X RV)) = 0
\end{align*}
$\Leftrightarrow$ $R(\mathcal{A}_X RV + Q_R \mathcal{H}\nabla_X RV) = 0$
$\Leftrightarrow$ $\mathcal{A}_X RV = 0, \ Q_R \mathcal{H}\nabla_X RV = 0$

and
\begin{align*}
(\nabla F_*)(V, W)
      &= -F_* (\nabla_V W)   \\
      &= F_* (R\nabla_V RW)    \\
      &= F_* (R(\mathcal{T}_V RW + \mathcal{H}\nabla_V RW)) = 0
\end{align*}
$\Leftrightarrow$ $R(\mathcal{T}_V RW + Q_R\mathcal{H}\nabla_V RW) = 0$
$\Leftrightarrow$ $\mathcal{T}_V RW = 0, \ Q_R\mathcal{H}\nabla_V RW = 0$.

Hence,
$$
a) \Leftrightarrow b), \quad a) \Leftrightarrow c), \quad a) \Leftrightarrow d).
$$

Therefore, the result follows.
\end{proof}

\begin{lemma}
Let $F$ be a h-Lagrangian submersion from a hyperk\"{a}hler
manifold $(M,I,J,K,g_M)$ onto a Riemannian manifold $(N, g_N)$
such that $(I,J,K)$ is a h-Lagrangian basis. Then the following
conditions are equivalent:

a) the map $F$ is a totally geodesic map.

b) $\mathcal{A}_X IV = 0$ and $\mathcal{T}_V IW = 0$ for $V,W\in \Gamma(\ker F_*)$ and $X\in \Gamma((\ker F_*)^{\perp})$.

c) $\mathcal{A}_X KV = 0$ and $\mathcal{T}_V KW = 0$ for $V,W\in \Gamma(\ker F_*)$ and $X\in \Gamma((\ker F_*)^{\perp})$.

d) $\mathcal{A}_X JV = 0$ and $\mathcal{T}_V JW = 0$ for $V,W\in \Gamma(\ker F_*)$ and $X\in \Gamma((\ker F_*)^{\perp})$.
\end{lemma}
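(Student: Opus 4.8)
The plan is to mirror the structure of Lemma \ref{lem: 3-2} and Lemma \ref{lem: 3-3}: the conditions involving $I$ and $K$ will follow at once from the h-anti-invariant computation, while the condition involving $J$ requires a separate direct argument. Theorem \ref{thm: 3-4} supplies, for each $R\in\{I,J,K\}$, the fourfold criterion $\mathcal{A}_X RV=0$, $Q_R\mathcal{H}\nabla_X RV=0$, $\mathcal{T}_V RW=0$, $Q_R\mathcal{H}\nabla_V RW=0$. My first observation would be that for $R=I$ and $R=K$ the h-Lagrangian hypothesis gives $R(\ker F_*)=(\ker F_*)^{\perp}$, so $\mu_R=0$ in the splitting (\ref{eq: 3-2}) and the projection $Q_R$ vanishes identically. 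Hence the two $Q_R$-conditions are automatic, and the proof of Theorem \ref{thm: 3-4} yields $a)\Leftrightarrow b)$ and $a)\Leftrightarrow c)$ directly.

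The substance of the lemma is therefore the equivalence $a)\Leftrightarrow d)$, where $J(\ker F_*)=\ker F_*$ and the Theorem \ref{thm: 3-4} argument (which presumed $R$ maps vertical vectors to horizontal ones) no longer applies verbatim. I would compute the two relevant components of the second fundamental form directly. Since $F_*V=0$ for $V\in\Gamma(\ker F_*)$, and using $J^2=-id$ together with $\nabla J=0$ on the hyperk\"{a}hler manifold, I would rewrite $(\nabla F_*)(X,V)=-F_*(\nabla_X V)=F_*(J\nabla_X JV)$ and similarly $(\nabla F_*)(V,W)=F_*(J\nabla_V JW)$. Now $JV,JW\in\Gamma(\ker F_*)$, so I decompose $\nabla_X JV$ and $\nabla_V JW$ into horizontal and vertical parts; by definition these horizontal parts are exactly $\mathcal{A}_X JV$ and $\mathcal{T}_V JW$ respectively.

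The key point, and the only place the hypothesis $J(\ker F_*)=\ker F_*$ is used, is that $J$ preserves both $\ker F_*$ and $(\ker F_*)^{\perp}$ by Remark (a): the $J$-image of each vertical part lands again in $\ker F_*$ and is annihilated by $F_*$, whereas the $J$-image of each horizontal part remains horizontal, where $F_*$ is a linear isometry. This reduces the two components to $(\nabla F_*)(X,V)=F_*(J\mathcal{A}_X JV)$ and $(\nabla F_*)(V,W)=F_*(J\mathcal{T}_V JW)$, each of which vanishes if and only if $\mathcal{A}_X JV=0$, respectively $\mathcal{T}_V JW=0$, because $J$ is an isomorphism of $(\ker F_*)^{\perp}$. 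Combined with $(\nabla F_*)(X,Y)=0$ for horizontal $X,Y$ from (\ref{eq: 2-9}) and the symmetry (\ref{eq: 2-8}), these two components exhaust the second fundamental form, giving $a)\Leftrightarrow d)$. I expect the main obstacle to be purely the careful bookkeeping of which summands $F_*$ kills; conceptually the Lagrangian hypothesis makes the $\mu_R$-type remainder terms of the h-anti-invariant case disappear, so no new estimates are needed.
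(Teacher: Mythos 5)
Your proposal is correct and takes essentially the same route as the paper: the paper likewise disposes of $a)\Leftrightarrow b)$ and $a)\Leftrightarrow c)$ by citing the proof of Theorem \ref{thm: 3-4} (your observation that $\mu_I=\mu_K=0$, hence $Q_I=Q_K=0$, is exactly the implicit justification), and then proves $a)\Leftrightarrow d)$ by the identical direct computation $(\nabla F_*)(X,V)=F_*(J(\mathcal{A}_X JV+\mathcal{V}\nabla_X JV))=F_*J\mathcal{A}_X JV$ and $(\nabla F_*)(V,W)=F_*(J(\mathcal{T}_V JW+\mathcal{V}\nabla_V JW))=F_*J\mathcal{T}_V JW$, using that $J$ preserves both distributions. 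No gaps; your bookkeeping of which summands $F_*$ annihilates matches the paper's argument exactly.
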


\begin{proof}
By the proof of Theorem \ref{thm: 3-4}, we have $a) \Leftrightarrow b)$ and $a) \Leftrightarrow c)$.

Given $V,W\in \Gamma(\ker F_*)$ and $X\in \Gamma((\ker F_*)^{\perp})$, we get
\begin{align*}
(\nabla F_*)(X, V) &= -F_* (\nabla_X V)    \\
      &= F_* (J\nabla_X JV)   \\
      &= F_* (J(\mathcal{A}_X JV + \mathcal{V}\nabla_X JV)) \\
      &= F_* J\mathcal{A}_X JV = 0
\end{align*}
$\Leftrightarrow$ $\mathcal{A}_X JV = 0$

and
\begin{align*}
(\nabla F_*)(V, W)
      &= -F_* (\nabla_V W)   \\
      &= F_* (J\nabla_V JW)    \\
      &= F_* (J(\mathcal{T}_V JW + \mathcal{V}\nabla_V JW))   \\
      &= F_* J\mathcal{T}_V JW  = 0
\end{align*}
$\Leftrightarrow$ $\mathcal{T}_V JW  = 0$,

which implies $a) \Leftrightarrow d)$.

Therefore, we obtain the result.
\end{proof}

\begin{theorem}\label{thm: 3-5}
Let $F$ be a h-anti-invariant submersion from a hyperk\"{a}hler
manifold $(M,I,J,K,g_M)$ onto a Riemannian manifold $(N, g_N)$
such that $(I,J,K)$ is a h-anti-invariant basis.
Then the following conditions are equivalent:

a) the map $F$ is harmonic.

b) $Q_I(trace (\mathcal{T})) = 0$ on $\ker F_*$ and $trace (I\mathcal{T}_V) = 0$ on $\ker F_*$ for $V\in \Gamma(\ker F_*)$.

c) $Q_J(trace (\mathcal{T})) = 0$ on $\ker F_*$ and $trace (J\mathcal{T}_V) = 0$ on $\ker F_*$ for $V\in \Gamma(\ker F_*)$.

d) $Q_K(trace (\mathcal{T})) = 0$ on $\ker F_*$ and $trace (K\mathcal{T}_V) = 0$ on $\ker F_*$ for $V\in \Gamma(\ker F_*)$.
\end{theorem}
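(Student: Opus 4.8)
The plan is to reduce the harmonicity of $F$ to the vanishing of the mean curvature of the fibres and then to split this vanishing according to the h-anti-invariant decomposition (\ref{eq: 3-2}).

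First I would fix a local orthonormal frame $\{ e_1,\dots,e_r \}$ of $\ker F_*$. Since $(\nabla F_*)(X,Y)=0$ for $X,Y\in\Gamma((\ker F_*)^{\perp})$ by (\ref{eq: 2-9}), only the vertical directions contribute to the tension field, so that
$$
trace(\nabla F_*)=\sum_{i=1}^r (\nabla F_*)(e_i,e_i).
$$
As $F_* e_i=0$ and $\widehat{\nabla}_{e_i} e_i\in\Gamma(\ker F_*)$, the second fundamental form collapses to $(\nabla F_*)(e_i,e_i)=-F_*(\mathcal{T}_{e_i} e_i)$, whence $trace(\nabla F_*)=-F_*(trace(\mathcal{T}))$, where $trace(\mathcal{T})=\sum_i \mathcal{T}_{e_i} e_i\in\Gamma((\ker F_*)^{\perp})$. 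Because $F_*$ restricts to a linear isometry on the horizontal space by (\ref{eq: 2-1}), the map $F$ is harmonic if and only if $trace(\mathcal{T})=0$.

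Next, for $R\in\{ I,J,K \}$ I would use the splitting (\ref{eq: 3-3}) to write $trace(\mathcal{T})=P_R\, trace(\mathcal{T})+Q_R\, trace(\mathcal{T})$; thus $trace(\mathcal{T})=0$ if and only if $Q_R\, trace(\mathcal{T})=0$ and $P_R\, trace(\mathcal{T})=0$. The first of these is precisely the first condition appearing in b), c), d). It therefore remains to match $P_R\, trace(\mathcal{T})=0$ with the family of conditions $trace(R\mathcal{T}_V)=0$ for all $V\in\Gamma(\ker F_*)$. Using the skew-symmetry (\ref{eq: 2-71}) of $\mathcal{T}$, the symmetry (\ref{eq: 2-7}), and the relation $g_M(RU,W)=-g_M(U,RW)$ coming from (\ref{hypermet}) and $R^2=-id$, a short computation gives
$$
trace(R\mathcal{T}_V)=g_M\!\Big(V,\ \sum_i \mathcal{T}_{e_i}(Re_i)\Big)
\quad\text{for}\ V\in\Gamma(\ker F_*),
$$
so that these conditions hold for every $V$ if and only if the vertical vector $A:=\sum_i \mathcal{T}_{e_i}(Re_i)$ vanishes. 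Finally, since $\{ Re_k \}$ is an orthonormal frame of $R(\ker F_*)$ and $\nabla R=0$ on the hyperk\"ahler manifold, I would use $\nabla_{e_i}(Re_i)=R\nabla_{e_i} e_i$ together with the skew-symmetry of $R$ to obtain the pairing identity
$$
g_M(A,e_k)=-\,g_M\big(P_R\, trace(\mathcal{T}),\,Re_k\big),
$$
which shows $A=0$ if and only if $P_R\, trace(\mathcal{T})=0$. Combining the two parts yields a)$\Leftrightarrow$b), and repeating the argument with $J$ and $K$ in place of $R$ gives a)$\Leftrightarrow$c) and a)$\Leftrightarrow$d).

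The step I expect to be the main obstacle is the last equivalence. It is natural to hope that $P_R\, trace(\mathcal{T})=0$ coincides with the trace condition directly, but the two relevant sums pair $\mathcal{T}_{e_i} e_i$ with $Re_k$ in one case and $\mathcal{T}_{e_i}(Re_i)$ with $e_k$ in the other, and these are a priori unrelated. The bridge between them is exactly the parallelism $\nabla R=0$ of the hyperk\"ahler structure, so the care lies in invoking $\nabla_{e_i}(Re_i)=R\nabla_{e_i} e_i$ at the right moment; without it the computation only runs in a circle.
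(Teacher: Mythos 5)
Your proposal is correct and takes essentially the same approach as the paper's proof: both reduce harmonicity to $trace(\mathcal{T})=\sum_i \mathcal{T}_{e_i}e_i=0$ via (\ref{eq: 2-9}), split this horizontal vector into its $P_R$ and $Q_R$ components, and identify the vanishing of the $P_R$-component with $trace(R\mathcal{T}_V)=0$ for all vertical $V$ by combining (\ref{eq: 2-7}), (\ref{eq: 2-71}), the skew-adjointness of $R$, and $\nabla R=0$. The paper merely organizes this as one chain of equalities from $g_M(\sum_i \mathcal{T}_{e_i}e_i, RV)$ to $-\sum_i g_M(e_i, R\mathcal{T}_V e_i)$ (after first establishing $\mathcal{T}_V RW=\mathcal{V}RP_R\mathcal{T}_V W$), and that chain passes through exactly the auxiliary vertical vector $\sum_i \mathcal{T}_{e_i}(Re_i)$ that you call $A$, so your two-step pairing argument is the same computation repackaged.
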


\begin{proof}
By (\ref{eq: 2-9}), we know that the map $F$ is harmonic if and only if $\displaystyle{ \sum_{i=1}^m \mathcal{T}_{e_i} e_i = 0 }$ for any
local orthonormal frame $\{ e_1,e_2,\cdots,e_m \}$ of $\ker F_*$.

Given $V,W\in \Gamma(\ker F_*)$, $R\in \{ I,J,K \}$, and a local orthonormal frame $\{ e_1,e_2,\cdots,e_m \}$ of $\ker F_*$,
by using (\ref{eq: 3-2}) and (\ref{eq: 3-3}),
we obtain
\begin{align*}
\mathcal{T}_V RW &= \mathcal{V}\nabla_V RW    \\
      &= \mathcal{V}R\nabla_V W    \\
      &= \mathcal{V}R(\mathcal{T}_V W + \mathcal{V}\nabla_V W)   \\
      &= \mathcal{V}RP_R\mathcal{T}_V W
\end{align*}
so that by using (\ref{eq: 2-7}) and (\ref{eq: 2-71}), we get
\begin{align*}
g_M(\sum_{i=1}^m \mathcal{T}_{e_i} e_i, RV)
      &= \sum_{i=1}^m g_M(\mathcal{T}_{e_i} e_i, RV)   \\
      &= \sum_{i=1}^m g_M(P_R \mathcal{T}_{e_i} e_i, RV)    \\
      &= -\sum_{i=1}^m g_M(RP_R \mathcal{T}_{e_i} e_i, V)    \\
      &= -\sum_{i=1}^m g_M(\mathcal{V}RP_R \mathcal{T}_{e_i} e_i, V)    \\
      &= -\sum_{i=1}^m g_M(\mathcal{T}_{e_i} Re_i, V)    \\
      &= \sum_{i=1}^m g_M(Re_i, \mathcal{T}_{e_i} V)    \\
      &= \sum_{i=1}^m g_M(Re_i, \mathcal{T}_V {e_i})    \\
      &= -\sum_{i=1}^m g_M(e_i, R\mathcal{T}_V {e_i}) = 0
\end{align*}
$\Leftrightarrow$ $trace (R\mathcal{T}_V) = 0$ for $V\in \Gamma(\ker F_*)$.

Hence,
$$
a) \Leftrightarrow b), \quad a) \Leftrightarrow c), \quad a) \Leftrightarrow d).
$$

Therefore, the result follows.
\end{proof}

\begin{lemma}
Let $F$ be a h-Lagrangian submersion from a hyperk\"{a}hler
manifold $(M,I,J,K,g_M)$ onto a Riemannian manifold $(N, g_N)$
such that $(I,J,K)$ is a h-Lagrangian basis.
Then the map $F$ is harmonic.
\end{lemma}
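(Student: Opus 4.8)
The plan is to reduce the harmonicity of $F$ to the vanishing of the vertical trace $\sum_{i}\mathcal{T}_{e_i}e_i$ of the O'Neill tensor, and then to use the invariance $J(\ker F_*)=\ker F_*$ to make this trace cancel in $J$-conjugate pairs.

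First, exactly as in the opening line of the proof of Theorem \ref{thm: 3-5}, I would invoke (\ref{eq: 2-9}) to record that, for a Riemannian submersion, $(\nabla F_*)(X,Y)=0$ whenever $X,Y\in\Gamma((\ker F_*)^{\perp})$, so that $F$ is harmonic precisely when $\sum_{i=1}^m\mathcal{T}_{e_i}e_i=0$ for one (hence any) local orthonormal frame $\{e_1,\dots,e_m\}$ of $\ker F_*$. Thus it suffices to exhibit a single adapted frame for which this vertical mean-curvature sum vanishes. Specializing Theorem \ref{thm: 3-5} to $R=I$ only simplifies matters partway: since $I(\ker F_*)=(\ker F_*)^{\perp}$ forces $\mu_I=0$, the condition $Q_I(trace\,\mathcal{T})=0$ is automatic, but the remaining condition $trace\,(I\mathcal{T}_V)=0$ still has to be established, which is what the pairing argument below accomplishes directly.

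Second, I would prove the commutation identity $\mathcal{T}_V(JW)=J\,\mathcal{T}_V W$ for $V,W\in\Gamma(\ker F_*)$. Since $JW\in\Gamma(\ker F_*)$, we have $\mathcal{T}_V(JW)=\mathcal{H}\nabla_V(JW)$; using $\nabla J=0$ this equals $\mathcal{H}(J\nabla_V W)$, and because $J$ is $g_M$-orthogonal with $J(\ker F_*)=\ker F_*$ it also preserves $(\ker F_*)^{\perp}$, so $J$ commutes with the horizontal projection and $\mathcal{H}(J\nabla_V W)=J\,\mathcal{H}\nabla_V W=J\,\mathcal{T}_V W$. Combined with the symmetry (\ref{eq: 2-7}) this yields $\mathcal{T}_{JV}W=\mathcal{T}_W(JV)=J\,\mathcal{T}_W V=J\,\mathcal{T}_V W$, and hence $\mathcal{T}_{JV}(JV)=J\,\mathcal{T}_V(JV)=J^2\,\mathcal{T}_V V=-\mathcal{T}_V V$.

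Finally, since $J$ restricts to a $g_M$-orthogonal almost complex structure on each fiber, $m=\dim(\ker F_*)$ is even, say $m=2n$, and I may take the frame to be $J$-adapted, $\{e_1,Je_1,\dots,e_n,Je_n\}$. Then $\sum_{i=1}^m\mathcal{T}_{e_i}e_i=\sum_{k=1}^n\bigl(\mathcal{T}_{e_k}e_k+\mathcal{T}_{Je_k}(Je_k)\bigr)=\sum_{k=1}^n\bigl(\mathcal{T}_{e_k}e_k-\mathcal{T}_{e_k}e_k\bigr)=0$, so $F$ is harmonic. There is no serious obstacle here; the only points requiring care are the commutation identity, specifically the fact that $J$ preserves the horizontal distribution so that the projection may be pulled through $J$, together with the elementary observation that a compatible $J$ on the vertical space forces its dimension to be even, which guarantees the existence of a $J$-adapted orthonormal frame.
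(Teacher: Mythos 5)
Your proposal is correct and follows essentially the same route as the paper: you prove the commutation identity $\mathcal{T}_V JW = J\mathcal{T}_V W$ from $\nabla J = 0$ together with the $J$-invariance of the vertical and horizontal distributions, and then cancel the vertical trace pairwise in a $J$-adapted orthonormal frame $\{e_1, Je_1, \dots, e_n, Je_n\}$, which is exactly the paper's argument. Your only additions are to make explicit two points the paper leaves implicit, namely the reduction of harmonicity to $\sum_i \mathcal{T}_{e_i} e_i = 0$ via (\ref{eq: 2-9}) and the evenness of the fiber dimension guaranteeing the adapted frame.
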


\begin{proof}
Since $J(\ker F_*) = \ker F_*$, we can choose a local orthonormal frame $\{ e_1,Je_1,$ $\cdots,e_k,Je_k \}$ of $\ker F_*$.

Given $V,W\in \Gamma(\ker F_*)$, we have 
\begin{align*}
\mathcal{T}_V JW &= \mathcal{H}\nabla_V JW    \\
      &= \mathcal{H}J\nabla_V W    \\
      &= \mathcal{H}J(\mathcal{T}_V W + \mathcal{V}\nabla_V W)   \\
      &= J\mathcal{T}_V W
\end{align*}
so that
\begin{align*}
\sum_{i=1}^k (\mathcal{T}_{e_i} e_i + \mathcal{T}_{Je_i} Je_i)
      &= \sum_{i=1}^k (\mathcal{T}_{e_i} e_i + J\mathcal{T}_{Je_i} e_i)   \\
      &= \sum_{i=1}^k (\mathcal{T}_{e_i} e_i + J\mathcal{T}_{e_i} Je_i)   \\
      &= \sum_{i=1}^k (\mathcal{T}_{e_i} e_i + J^2\mathcal{T}_{e_i} e_i)    \\
      &= \sum_{i=1}^k (\mathcal{T}_{e_i} e_i - \mathcal{T}_{e_i} e_i)   \\
      &= 0
\end{align*}

Therefore, the result follows.
\end{proof}

\section{Decomposition theorems}\label{decom}

First of all, we remind some notions.
Let $(M,g)$ be a Riemannian manifold and $L$ a foliation of $M$.
Let $\xi$ be the tangent bundle of $L$ considered as a subbundle of the tangent bundle $TM$ of $M$.

We call $L$ a {\em totally umbilic foliation} \cite{PR} of $M$ if
\begin{equation}\label{eq: 4-1}
h(X, Y) = g(X, Y)H \quad \text{for} \ X,Y\in \Gamma(\xi),
\end{equation}
where $h$ is the second fundamental form of $L$ in $M$ and $H$ is the mean curvature vector field of $L$ in $M$.

The foliation $L$ is said to be a {\em spheric foliation} \cite{PR} if it is a totally umbilic foliation and
\begin{equation}\label{eq: 4-2}
\nabla_X H\in \Gamma(\xi) \quad \text{for} \ X\in \Gamma(\xi),
\end{equation}
where $\nabla$ is the Levi-Civita connection of $g$.

We call $L$ a {\em totally geodesic foliation} \cite{PR} of $M$ if
\begin{equation}\label{eq: 4-3}
\nabla_X Y\in \Gamma(\xi) \quad \text{for} \ X,Y\in \Gamma(\xi).
\end{equation}
Let $(M_1,g_1)$ and $(M_2,g_2)$ be Riemannian manifolds, $f_i : M_1 \times M_2 \mapsto \mathbb{R}$ a positive $C^{\infty}$-function,
and $\pi_i : M_1 \times M_2 \mapsto M_i$ the canonical projection for $i = 1,2$.

We call $M_1 \times_{(f_1,f_2)} M_2$ a {\em double-twisted product manifold} \cite{PR} of $(M_1,g_1)$ and $(M_2,g_2)$ if it is the product manifold
$M := M_1 \times M_2$ with the Riemannian metric $g$ such that
\begin{equation}\label{eq: 4-4}
g(X, Y) = f_1^2\cdot g_1({\pi_1}_* X, {\pi_1}_* Y) + f_2^2\cdot g_2({\pi_2}_* X, {\pi_2}_* Y) \quad \text{for} \ X,Y\in \Gamma(TM).
\end{equation}
We call $M_1 \times_{(f_1,f_2)} M_2$ {\em non-trivial} if both $f_1$ and $f_2$ are not constant functions.

The Riemannian manifold $M_1 \times_f M_2$ is said to be a {\em twisted product manifold} \cite{PR} of $(M_1,g_1)$ and $(M_2,g_2)$
if $M_1 \times_f M_2 = M_1 \times_{(1,f)} M_2$.

We call $M_1 \times_f M_2$ {\em non-trivial} if $f$ is not a constant function.

The twisted product manifold $M_1 \times_f M_2$ is said to be a {\em warped product manifold} \cite{PR} of $(M_1,g_1)$ and $(M_2,g_2)$
if $f$ depends only on the points of $M_1$. (i.e., $f\in C^{\infty}(M_1, \mathbb{R})$)

Let $M_1$ and $M_2$ be connected $C^{\infty}$-manifolds and $M$ the product manifold $M_1 \times M_2$.
Let $\pi_i : M \mapsto M_i$ be the canonical projection for $i=1,2$.
Let $\xi_i := \ker {\pi_{3-i}}_*$ and $P_i : TM \mapsto \xi_i$ the vector bundle projection such that $TM = \xi_1 \oplus \xi_2$.
And let $L_i$ be the canonical foliation of $M$ by the integral manifolds of $\xi_i$ for $i = 1,2$.

\begin{proposition}\label{prop: 4-1} \cite{PR}
Let $g$ be a Riemannian metric on the product manifold $M_1 \times M_2$ and assume that the canonical foliations $L_1$ and $L_2$
 intersect perpendicularly everywhere. Then $g$ is the metric of

a) a double-twisted product manifold $M_1 \times_{(f_1,f_2)} M_2$ if and only if $L_1$ and $L_2$ are totally umbilic foliations,

b) a twisted product manifold $M_1 \times_f M_2$ if and only if $L_1$ is a totally geodesic foliation and $L_2$ is a totally umbilic foliation,

c) a warped product manifold $M_1 \times_f M_2$ if and only if $L_1$ is a totally geodesic foliation and $L_2$ is a spheric foliation,

d) a (usual) Riemannian product manifold $M_1 \times M_2$ if and only if $L_1$ and $L_2$ are totally geodesic foliations.
\end{proposition}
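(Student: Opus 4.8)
The plan is to reduce the whole statement to a single computation of the second fundamental forms of $L_1$ and $L_2$, after which the four cases become a dictionary between normal forms of the metric and curvature properties of the leaves. First I would use the perpendicularity hypothesis to split the metric as an orthogonal sum $g = g^{(1)}\oplus g^{(2)}$, where $g^{(i)}$ is the restriction of $g$ to $\xi_i$ and $g(X,U)=0$ whenever $X\in\Gamma(\xi_1)$, $U\in\Gamma(\xi_2)$. In these terms the metric (\ref{eq: 4-4}) of a double-twisted product is exactly the assertion that $g^{(i)}=f_i^2\,g_i((\pi_i)_\ast\,\cdot,(\pi_i)_\ast\,\cdot)$ for a fixed metric $g_i$ on $M_i$; a twisted product forces in addition $f_1\equiv 1$, a warped product forces moreover $f_2$ to descend to $M_1$, and a Riemannian product forces $f_1\equiv f_2\equiv 1$. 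So everything becomes a translation between these normal forms of $g^{(1)},g^{(2)}$ and the foliation conditions (\ref{eq: 4-1})--(\ref{eq: 4-3}).

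The engine of the proof is to evaluate $h_1$ using vector fields lifted from the two factors, for which the bracket of a $\xi_1$-field with a $\xi_2$-field vanishes. For such basic $X,Y\in\Gamma(\xi_1)$ and $U\in\Gamma(\xi_2)$ the Koszul formula collapses, since all cross terms $g(X,U)$ and the surviving brackets vanish, to
\[
g(h_1(X,Y),U)=g(\nabla_X Y,U)=-\tfrac12\,U\big(g(X,Y)\big)=-\tfrac12(\mathcal{L}_U g)(X,Y),
\]
and symmetrically for $h_2$. Thus the second fundamental form of a leaf measures the infinitesimal change of the leaf metric as one moves in the normal direction. From this identity I would read off the two decisive equivalences: $L_1$ is totally geodesic iff $U\big(g^{(1)}(X,Y)\big)=0$ for all $U\in\Gamma(\xi_2)$, i.e. $g^{(1)}$ is $L_2$-invariant and hence descends to $M_1$ (so $f_1\equiv 1$); and $L_1$ is totally umbilic iff $-\tfrac12 U\big(g^{(1)}(X,Y)\big)=g^{(1)}(X,Y)\,g(H_1,U)$, i.e. $U\big(\log g^{(1)}(X,Y)\big)$ is independent of $X,Y$. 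In local product coordinates the latter says that all ratios of the components of $g^{(1)}$ are independent of the $M_2$-coordinates, which is precisely the factorization $g^{(1)}=f_1^2\,g_1((\pi_1)_\ast\,\cdot,(\pi_1)_\ast\,\cdot)$ with $g_1$ the metric induced on a fixed reference leaf. Note that the umbilic condition automatically makes the mean-curvature one-form $U\mapsto g(H_1,U)$ exact, equal to $-d\log f_1$ on $\xi_2$, so no separate integrability hypothesis is needed. Applying the same to $L_2$ yields $g^{(2)}=f_2^2\,g_2((\pi_2)_\ast\,\cdot,(\pi_2)_\ast\,\cdot)$, and cases (a), (b), (d) then follow by combining these equivalences and normalizing the constant factors.

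The step I expect to be the main obstacle is (c), the spheric/warped case, since it is the one genuinely new constraint beyond umbilicity. Starting from a twisted product ($f_1\equiv 1$, $g^{(2)}=f^2\,g_2((\pi_2)_\ast\,\cdot,(\pi_2)_\ast\,\cdot)$), the leaf $L_2$ is already umbilic with $g(H_2,X)=-X(\log f)$ for $X\in\Gamma(\xi_1)$, so $H_2$ is minus the $\xi_1$-gradient of $\log f$. The spheric condition (\ref{eq: 4-2}) demands in addition that $\nabla_U H_2\in\Gamma(\xi_2)$ for all $U\in\Gamma(\xi_2)$. Pairing with $X\in\Gamma(\xi_1)$ and reusing the basic-field computation, which now shows the $\xi_1$-component of $\nabla_U X$ vanishes because $g^{(1)}$ is $\xi_2$-invariant, one finds $g(\nabla_U H_2,X)=-X\big(U(\log f)\big)$. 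Hence the leaf is spheric iff $X\big(U(\log f)\big)=0$ for all $X\in\Gamma(\xi_1)$, $U\in\Gamma(\xi_2)$, i.e. iff $\log f$ is a sum of a function on $M_1$ and a function on $M_2$; absorbing the $M_2$-summand into $g_2$ shows the warping function may be taken to depend only on $M_1$, which is the warped product. The remaining work is bookkeeping: the forward implications in (a)--(d) are these same computations read in reverse, and one invokes connectedness of $M_1$ and $M_2$ to globalize the local factorizations and to fix the normalizations of $f_1$ and $f_2$.
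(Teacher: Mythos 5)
You should know first that the paper contains no proof of this statement: Proposition \ref{prop: 4-1} is imported verbatim from Ponge and Reckziegel \cite{PR} and is used as a black box to derive the decomposition theorems of Section \ref{decom}. So there is no in-paper argument to compare yours against; what can be said is that your proof is correct, and it is in substance the original argument of \cite{PR}. Your key identity $g(h_1(X,Y),U)=-\tfrac12\,U(g(X,Y))=-\tfrac12(\mathcal{L}_U g)(X,Y)$ for lifted (hence commuting) fields $X,Y\in\Gamma(\xi_1)$, $U\in\Gamma(\xi_2)$ is the right engine: it turns \emph{totally geodesic} into $M_2$-independence of $g^{(1)}$, \emph{totally umbilic} into the linear equation $U(g(X,Y))=-2g(H_1,U)\,g(X,Y)$, and, in the twisted situation, \emph{spheric} into $X(U(\log f))=0$ via $g(\nabla_U H_2,X)=-X(U(\log f))$, which is exactly the dividing line between twisted and warped products (your use of $P_1\nabla_U X=0$, valid because $g^{(1)}$ is $\xi_2$-invariant there, is the step that makes this computation legitimate). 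The converses are indeed the same computations read backwards, and your observation that umbilicity forces the mean-curvature one-form to be exact, so that no auxiliary integrability hypothesis is needed, is the correct and essential point.

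Two places in your sketch deserve tightening, though neither is a gap in the approach. First, in passing from umbilicity to the factorization $g^{(1)}=f_1^2\,\pi_1^*g_1$ you argue with $U(\log g^{(1)}(X,Y))$ and with ``ratios of components''; off-diagonal components may vanish somewhere, so logarithms and ratios are not everywhere defined. The clean fix is linear-ODE uniqueness: set $\phi_\alpha:=-2g(H_1,\partial_{y^\alpha})$, define $F(x,y):=g_{11}(x,y)/g_{11}(x,y_0)>0$ so that $\partial_\alpha F=\phi_\alpha F$, and apply uniqueness for $\partial_\alpha G_{ab}=\phi_\alpha G_{ab}$, $G_{ab}(\cdot,y_0)=0$, to $G_{ab}:=g_{ab}-F\,g_{ab}(\cdot,y_0)$; this gives $g_{ab}(x,y)=F(x,y)\,g_{ab}(x,y_0)$ for all components simultaneously, i.e.\ $f_1=\sqrt{F}$ and $g_1=g^{(1)}|_{M_1\times\{y_0\}}$. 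Second, both this integration and the splitting $\log f=u(x)+v(y)$ in case (c) proceed along paths in one factor, so connectedness of $M_1$ and $M_2$ --- which the paper assumes in the standing set-up immediately preceding the proposition --- is what globalizes the local normal forms; it is worth saying so explicitly rather than leaving it as ``bookkeeping.''
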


Let $F$ be a Riemannian submersion from a Riemannian manifold $(M,g_M)$ onto a Riemannian manifold $(N,g_N)$ such that the distributions
$\ker F_*$ and $(\ker F_*)^{\perp}$ are integrable. Then we denote by $M_{\ker F_*}$ and $M_{(\ker F_*)^{\perp}}$ the integral manifolds of
$\ker F_*$ and $(\ker F_*)^{\perp}$, respectively. We also denote by $H$ and $H^{\perp}$ the mean curvature vector fields of
$\ker F_*$ and $(\ker F_*)^{\perp}$, respectively. i.e., $\displaystyle{ H = \frac{1}{m}\sum_{i=1}^m \mathcal{T}_{e_i} e_i }$ and
$\displaystyle{ H^{\perp} = \frac{1}{n}\sum_{i=1}^n \mathcal{A}_{v_i} v_i }$ for a local orthonormal frame $\{ e_1,\cdots,e_m \}$ of
$\ker F_*$ and a local orthonormal frame $\{ v_1,\cdots,v_n \}$ of  $(\ker F_*)^{\perp}$.

Using Proposition \ref{prop: 4-1}, Theorem \ref{thm: 3-2}, and Theorem \ref{thm: 3-3}, we get

\begin{theorem}
Let $F$ be a h-anti-invariant submersion from a hyperk\"{a}hler
manifold $(M,I,J,K,g_M)$ onto a Riemannian manifold $(N, g_N)$
such that $(I,J,K)$ is a h-anti-invariant basis.
Then the following conditions are equivalent:

a) $(M,g_M)$ is locally a Riemannian product manifold of the form $M_{(\ker F_*)^{\perp}} \times M_{\ker F_*}$.

b)
$$
g_M(\mathcal{A}_X B_I Y, IV) = g_M(C_I Y, I\mathcal{A}_X V) \ \text{and} \ \mathcal{T}_V B_I X + \mathcal{A}_{C_I X} V\in \Gamma(\mu_I)
$$
for $V\in \Gamma(\ker F_*)$ and $X,Y\in \Gamma((\ker F_*)^{\perp})$.

c)
$$
g_M(\mathcal{A}_X B_J Y, JV) = g_M(C_J Y, J\mathcal{A}_X V) \ \text{and} \ \mathcal{T}_V B_J X + \mathcal{A}_{C_J X} V\in \Gamma(\mu_J)
$$
for $V\in \Gamma(\ker F_*)$ and $X,Y\in \Gamma((\ker F_*)^{\perp})$.

d)
$$
g_M(\mathcal{A}_X B_K Y, KV) = g_M(C_K Y, K\mathcal{A}_X V) \ \text{and} \ \mathcal{T}_V B_K X + \mathcal{A}_{C_K X} V\in \Gamma(\mu_K)
$$
for $V\in \Gamma(\ker F_*)$ and $X,Y\in \Gamma((\ker F_*)^{\perp})$.
\end{theorem}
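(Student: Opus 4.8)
The plan is to read this final theorem as a direct application of the decomposition criterion in Proposition~\ref{prop: 4-1}(d), combined with the two foliation-geometry characterizations already established in Theorem~\ref{thm: 3-2} and Theorem~\ref{thm: 3-3}. The key observation is that $(M,g_M)$ is locally a Riemannian product of the form $M_{(\ker F_*)^{\perp}} \times M_{\ker F_*}$ precisely when both complementary distributions are integrable and each defines a totally geodesic foliation. So the first step is to set $M_1 = M_{(\ker F_*)^{\perp}}$ and $M_2 = M_{\ker F_*}$ and verify that the two canonical foliations $L_1, L_2$ intersect perpendicularly everywhere, which is immediate since $\ker F_*$ and $(\ker F_*)^{\perp}$ are orthogonal complements by construction of a Riemannian submersion. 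Then by Proposition~\ref{prop: 4-1}(d), the local Riemannian product structure is equivalent to requiring that $L_1$ and $L_2$ are \emph{both} totally geodesic foliations.

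Next I would translate each totally-geodesic condition into the tensorial form appearing in statements b)--d). For the foliation determined by $(\ker F_*)^{\perp}$, Theorem~\ref{thm: 3-2} already gives, for each $R\in\{I,J,K\}$, the equivalence between "$(\ker F_*)^{\perp}$ defines a totally geodesic foliation" and the condition
$$
g_M(\mathcal{A}_X B_R Y, RV) = g_M(C_R Y, R\mathcal{A}_X V).
$$
For the foliation determined by $\ker F_*$, Theorem~\ref{thm: 3-3} gives the equivalence between "$\ker F_*$ defines a totally geodesic foliation" and
$$
\mathcal{T}_V B_R X + \mathcal{A}_{C_R X} V \in \Gamma(\mu_R).
$$
Since a) requires both foliations to be totally geodesic simultaneously, conjoining the two characterizations for a fixed $R$ yields exactly condition b) when $R=I$, condition c) when $R=J$, and condition d) when $R=K$. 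Crucially, each of Theorems~\ref{thm: 3-2} and~\ref{thm: 3-3} proves its equivalence \emph{for every} $R\in\{I,J,K\}$ independently, so the three conjunctions b), c), d) are each individually equivalent to a), and hence equivalent to one another.

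The proof therefore assembles as follows: a)$\Leftrightarrow$[$(\ker F_*)^{\perp}$ and $\ker F_*$ both totally geodesic] by Proposition~\ref{prop: 4-1}(d); then for each fixed $R$, [$(\ker F_*)^{\perp}$ totally geodesic]$\Leftrightarrow$[first half of the corresponding condition] by Theorem~\ref{thm: 3-2}, and [$\ker F_*$ totally geodesic]$\Leftrightarrow$[second half] by Theorem~\ref{thm: 3-3}, giving a)$\Leftrightarrow$b), a)$\Leftrightarrow$c), a)$\Leftrightarrow$d). I anticipate no genuine obstacle here, since all the analytic content has been offloaded to the earlier results; the only point demanding a little care is the hypothesis of Proposition~\ref{prop: 4-1} that the ambient manifold actually be (locally) a product with the two distributions arising as the canonical foliations. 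This is the standard local normal-form step: assuming $\ker F_*$ and $(\ker F_*)^{\perp}$ are integrable, one passes to a local product chart adapted to the two foliations, so that the perpendicular-intersection hypothesis holds and Proposition~\ref{prop: 4-1}(d) applies verbatim. With that in place the equivalences follow immediately and the result follows.
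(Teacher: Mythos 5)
Your proposal is correct and follows essentially the same route as the paper: the paper obtains this theorem precisely by combining Proposition~\ref{prop: 4-1}(d) with the totally geodesic foliation characterizations of Theorem~\ref{thm: 3-2} and Theorem~\ref{thm: 3-3}, exactly as you do. In fact the paper gives no written proof beyond the sentence ``Using Proposition~\ref{prop: 4-1}, Theorem~\ref{thm: 3-2}, and Theorem~\ref{thm: 3-3}, we get,'' so your write-up simply makes explicit the assembly (including the local product-chart step) that the paper leaves implicit.
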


Using Proposition \ref{prop: 4-1}, Lemma \ref{lem: 3-2}, and Lemma \ref{lem: 3-3}, we obtain

\begin{lemma}
Let $F$ be a h-Lagrangian submersion from a hyperk\"{a}hler
manifold $(M,I,J,K,g_M)$ onto a Riemannian manifold $(N, g_N)$
such that $(I,J,K)$ is a h-Lagrangian basis. Then the following
conditions are equivalent:

a) $(M,g_M)$ is locally a Riemannian product manifold of the form $M_{(\ker F_*)^{\perp}} \times M_{\ker F_*}$.

b)
$$
\mathcal{A}_X IY = 0 \ \text{and} \ \mathcal{T}_V IX = 0
$$
for $X,Y\in \Gamma((\ker F_*)^{\perp})$ and  $V\in \Gamma(\ker F_*)$.

c)
$$
\mathcal{A}_X KY = 0 \ \text{and} \ \mathcal{T}_V KX = 0
$$
for $X,Y\in \Gamma((\ker F_*)^{\perp})$ and  $V\in \Gamma(\ker F_*)$.

d)
$$
\mathcal{A}_X JY = 0 \ \text{and} \ \mathcal{T}_V JX = 0
$$
for $X,Y\in \Gamma((\ker F_*)^{\perp})$ and  $V\in \Gamma(\ker F_*)$.
\end{lemma}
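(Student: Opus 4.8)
The plan is to derive the statement directly from the product criterion in Proposition~\ref{prop: 4-1}(d), using Lemma~\ref{lem: 3-2} and Lemma~\ref{lem: 3-3} to translate the two resulting totally-geodesic conditions into the O'Neill tensors $\mathcal{A}$ and $\mathcal{T}$. The whole argument is a matter of correctly combining these three earlier results, so there is no deep obstacle; the only content is recognizing how condition (a) splits.

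First I would set $M_1 := M_{(\ker F_*)^{\perp}}$ and $M_2 := M_{\ker F_*}$, so that the canonical foliations $L_1$ and $L_2$ are the integral foliations of $(\ker F_*)^{\perp}$ and $\ker F_*$. Since these two distributions are orthogonal complements, the foliations intersect perpendicularly everywhere, which is exactly the standing hypothesis of Proposition~\ref{prop: 4-1}. Applying part (d) of that proposition, condition (a)---that $(M,g_M)$ is locally the Riemannian product $M_{(\ker F_*)^{\perp}} \times M_{\ker F_*}$---is equivalent to the statement that \emph{both} $(\ker F_*)^{\perp}$ and $\ker F_*$ define totally geodesic foliations on $M$.

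It then remains to feed each of these totally-geodesic conditions through the earlier lemmas and pair them off by the complex structure. By Lemma~\ref{lem: 3-2}, the horizontal distribution $(\ker F_*)^{\perp}$ defines a totally geodesic foliation if and only if $\mathcal{A}_X IY = 0$, if and only if $\mathcal{A}_X KY = 0$, if and only if $\mathcal{A}_X JY = 0$ for all $X,Y\in \Gamma((\ker F_*)^{\perp})$. Dually, by Lemma~\ref{lem: 3-3}, the vertical distribution $\ker F_*$ defines a totally geodesic foliation if and only if $\mathcal{T}_V IX = 0$, if and only if $\mathcal{T}_V KX = 0$, if and only if $\mathcal{T}_V JX = 0$ for all such $X$ and all $V\in\Gamma(\ker F_*)$. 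Fixing a single $R\in\{I,J,K\}$ and conjoining the corresponding $\mathcal{A}$-equation with the corresponding $\mathcal{T}$-equation shows that (a) is equivalent to the pair ``$\mathcal{A}_X RY = 0$ and $\mathcal{T}_V RX = 0$,'' which is precisely condition (b) when $R=I$, condition (c) when $R=K$, and condition (d) when $R=J$. Hence $a)\Leftrightarrow b)$, $a)\Leftrightarrow c)$, and $a)\Leftrightarrow d)$.

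The one point deserving a word of care is that Proposition~\ref{prop: 4-1} presupposes the two distributions integrate to honest foliations. This requires no separate verification: the fibers $\ker F_*$ are always integrable as fibers of the submersion, while for $(\ker F_*)^{\perp}$ the defining equation of a totally geodesic horizontal foliation, $\mathcal{V}\nabla_X Y = 0$, coincides with $\mathcal{A}_X Y = 0$ and thus already yields $\mathcal{V}[X,Y] = 2\mathcal{A}_X Y = 0$ by (\ref{eq: 2-6}). Consequently integrability of the horizontal distribution is automatic once the stated condition holds, the hypotheses of Proposition~\ref{prop: 4-1} are genuinely in force, and the three equivalences follow immediately from the cited results.
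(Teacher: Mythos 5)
Your proposal is correct and follows essentially the same route as the paper, which derives this lemma exactly by combining Proposition~\ref{prop: 4-1}(d) with Lemma~\ref{lem: 3-2} and Lemma~\ref{lem: 3-3} (the paper states no more than this). Your extra observation that integrability of the horizontal distribution comes for free from $\mathcal{A}_X Y = \tfrac{1}{2}\mathcal{V}[X,Y]$ is a valid point of care that the paper leaves implicit.
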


Now, we deal with the geometry of distributions $\ker F_*$ and $(\ker F_*)^{\perp}$.

\begin{theorem}\label{thm: 4-4}
Let $F$ be a Riemannian submersion from a Riemannian manifold $(M,g_M)$ onto a Riemannian manifold $(N,g_N)$.
Assume that the distribution $(\ker F_*)^{\perp}$ defines a totally umbilic foliation on $M$.
Then the distribution $(\ker F_*)^{\perp}$ also defines a totally geodesic foliation on $M$.
\end{theorem}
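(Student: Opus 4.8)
The plan is to recognise the second fundamental form of the horizontal foliation as the restriction of the O'Neill tensor $\mathcal{A}$ to horizontal vector fields, and then to use the skew-symmetry of $\mathcal{A}$ to kill the mean curvature.

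First I would write $\xi = (\ker F_*)^{\perp}$ for the tangent bundle of the foliation, so that its normal bundle is $\ker F_*$. For $X, Y \in \Gamma(\xi)$ the second fundamental form $h$ of the foliation is by definition the normal (here vertical) component of $\nabla_X Y$, and expanding (\ref{eq: 2-3}) with $\mathcal{H}X = X$, $\mathcal{V}X = 0$ shows $\mathcal{V}\nabla_X Y = \mathcal{A}_X Y$; hence $h(X,Y) = \mathcal{A}_X Y$ for horizontal $X,Y$.

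The decisive step uses (\ref{eq: 2-6}): since $\mathcal{A}_X Y = -\mathcal{A}_Y X$, we get $\mathcal{A}_X X = 0$ for every horizontal $X$. Feeding a local orthonormal frame $\{ v_1, \dots, v_n \}$ of $(\ker F_*)^{\perp}$ into the definition $H^{\perp} = \frac{1}{n}\sum_{i=1}^n \mathcal{A}_{v_i} v_i$ then forces the mean curvature vector field $H^{\perp}$ to vanish. The totally umbilic hypothesis (\ref{eq: 4-1}) now reads $h(X,Y) = g_M(X,Y) H^{\perp} = 0$, so $\nabla_X Y \in \Gamma(\xi)$ for all $X, Y \in \Gamma(\xi)$, which is exactly the totally geodesic condition (\ref{eq: 4-3}).

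The argument is short, and the only point requiring care is the identification $h(X,Y) = \mathcal{A}_X Y$; once that is in place the skew-symmetry of $\mathcal{A}$ does everything, and no property special to h-anti-invariant or h-Lagrangian submersions is needed. I expect the main (indeed only) conceptual obstacle to be making this identification cleanly. One even sees that the umbilicity hypothesis is stronger than necessary: since the foliation is integrable we have $\mathcal{V}[X,Y] = 0$, so (\ref{eq: 2-6}) already gives $\mathcal{A}_X Y = \tfrac{1}{2}\mathcal{V}[X,Y] = 0$ directly, yielding the totally geodesic conclusion without invoking the umbilic condition at all.
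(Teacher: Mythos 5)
Your proof is correct and takes essentially the same route as the paper: both identify the second fundamental form of the horizontal foliation with $\mathcal{A}_X Y$ via (\ref{eq: 2-3}) and use the alternation property (\ref{eq: 2-6}) to force $H^{\perp}=0$ — the paper just gets there less directly, by first deriving $\mathcal{A}_X V = -g_M(H^{\perp},V)X$ from the umbilic hypothesis and then pairing with $X$, whereas you take the trace of $\mathcal{A}$ immediately. Your closing observation that the umbilic hypothesis is redundant (integrability of $(\ker F_*)^{\perp}$ already gives $\mathcal{A}_X Y = \tfrac{1}{2}\mathcal{V}[X,Y] = 0$) is also correct, and is precisely what the paper's remark following the theorem alludes to.
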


\begin{proof}
Given $X,Y\in \Gamma((\ker F_*)^{\perp})$ and  $V\in \Gamma(\ker F_*)$, we get
\begin{equation}\label{eq: 4-5}
g_M(\nabla_X Y, V) = g_M(\mathcal{A}_X Y, V) = g_M(X, Y) g_M(H^{\perp}, V)
\end{equation}
and
\begin{equation}\label{eq: 4-6}
g_M(\nabla_X Y, V) = -g_M(Y, \nabla_X V) = -g_M(Y, \mathcal{A}_X V).
\end{equation}
Comparing (\ref{eq: 4-5}) and (\ref{eq: 4-6}), we obtain $\mathcal{A}_X V = -g_M(H^{\perp}, V)X$.

Hence,
\begin{equation}\label{eq: 4-7}
g_M(\mathcal{A}_X V, X) = -g_M(H^{\perp}, V)||X||^2.
\end{equation}
But
\begin{align*}
g_M(\mathcal{A}_X V, X) &= g_M(\nabla_X V, X)    \\
      &= -g_M(V, \nabla_X X)    \\
      &= -g_M(V, \mathcal{A}_X X)   \\
      &= 0 \ (\text{by} \ (\ref{eq: 2-6}))
\end{align*}
so that from (\ref{eq: 4-7}), we have $H^{\perp} = 0$.

Therefore, the result follows.
\end{proof}

\begin{remark}
From the equation: $\mathcal{A}_X Y = - \mathcal{A}_Y X$ for $X,Y\in \Gamma((\ker F_*)^{\perp})$, we can obtain Theorem \ref{thm: 4-4}.
But from the equation: $\mathcal{T}_V W =  \mathcal{T}_W V$ for $V,W\in \Gamma(\ker F_*)$, there are no theorems like Theorem \ref{thm: 4-4}
on $\ker F_*$.
\end{remark}

\begin{theorem}\label{thm: 4-6}
Let $F$ be a h-anti-invariant submersion from a hyperk\"{a}hler
manifold $(M,I,J,K,g_M)$ onto a Riemannian manifold $(N, g_N)$
such that $(I,J,K)$ is a h-anti-invariant basis.
Then the following conditions are equivalent:

a) the distribution $\ker F_*$ defines a totally umbilic foliation on $M$.

b)
$$
\mathcal{T}_V B_I X + \mathcal{H}\nabla_V C_I X = -g_M(H, X)IV
$$
for $V\in \Gamma(\ker F_*)$ and $X\in \Gamma((\ker F_*)^{\perp})$.

c)
$$
\mathcal{T}_V B_J X + \mathcal{H}\nabla_V C_J X = -g_M(H, X)JV
$$
for $V\in \Gamma(\ker F_*)$ and $X\in \Gamma((\ker F_*)^{\perp})$.

d)
$$
\mathcal{T}_V B_K X + \mathcal{H}\nabla_V C_K X = -g_M(H, X)KV
$$
for $V\in \Gamma(\ker F_*)$ and $X\in \Gamma((\ker F_*)^{\perp})$.
\end{theorem}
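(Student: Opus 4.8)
The plan is to reduce statement a) to an identity for the O'Neill tensor $\mathcal{T}$ and then transport that identity through the parallel structures $I,J,K$, in the spirit of the proofs of Theorem \ref{thm: 3-2} and Theorem \ref{thm: 3-3}. First I would record the standard fact that for the Riemannian submersion $F$ the leaves of $\ker F_*$ are the fibres, their second fundamental form is $h(V,W)=\mathcal{T}_V W$ for $V,W\in\Gamma(\ker F_*)$, and their mean curvature is $H=\frac1m\sum_i \mathcal{T}_{e_i}e_i$. Hence a) is equivalent to $\mathcal{T}_V W=g_M(V,W)H$, and, pairing against an arbitrary horizontal field and using $\mathcal{T}_V W=\mathcal{H}\nabla_V W$, to
$$
g_M(\mathcal{T}_V W,X)=g_M(V,W)\,g_M(H,X),\qquad V,W\in\Gamma(\ker F_*),\ X\in\Gamma((\ker F_*)^{\perp}).
$$

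Next I would observe that the left-hand side of b) telescopes: since $B_R X\in\Gamma(\ker F_*)$ and $C_R X\in\Gamma((\ker F_*)^{\perp})$, one has $\mathcal{T}_V B_R X=\mathcal{H}\nabla_V B_R X$, so that $\mathcal{T}_V B_R X+\mathcal{H}\nabla_V C_R X=\mathcal{H}\nabla_V(RX)=\mathcal{H}(R\nabla_V X)$, using $\nabla R=0$. Thus b) asserts the horizontal vector identity $\mathcal{H}\nabla_V RX=-g_M(H,X)RV$. To connect this with the umbilic relation above I would test both sides against the horizontal fields $RW$, $W\in\Gamma(\ker F_*)$, which span $R(\ker F_*)$. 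Using that $R$ is parallel and an isometry together with $g_M(X,W)=0$, the key computation is
$$
g_M(\mathcal{H}\nabla_V RX,RW)=g_M(\nabla_V X,W)=-g_M(X,\mathcal{T}_V W),
$$
while $g_M(-g_M(H,X)RV,RW)=-g_M(H,X)g_M(V,W)$. Equating these for all $W$ and $X$ returns precisely the umbilic relation, giving a) $\Leftrightarrow$ b); replacing $R=I$ by $J$ and then $K$ yields a) $\Leftrightarrow$ c) and a) $\Leftrightarrow$ d) verbatim.

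The step I expect to be the main obstacle is passing between the vector identity in b) and the scalar umbilic relation cleanly. Testing against $R(\ker F_*)$ controls only the $R(\ker F_*)$-component of $\mathcal{H}\nabla_V RX$; one must also account for its $\mu_R$-part $C_R\mathcal{H}\nabla_V X$, using the orthogonal splitting (\ref{eq: 3-2}) and the $R$-invariance of $\mu_R$, to be sure that the full identity b), and not merely its projection onto $R(\ker F_*)$, is equivalent to a). Verifying that the mean-curvature term $-g_M(H,X)RV$ indeed lies in $R(\ker F_*)$ and that the remaining $\mu_R$-contribution is consistent is the delicate bookkeeping; once the decompositions (\ref{eq: 3-2}) and (\ref{eq: 3-3}) are used carefully, the three equivalences follow uniformly in $R\in\{I,J,K\}$.
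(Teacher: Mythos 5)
Your route is the same as the paper's: the pairing identity
$$
g_M(\mathcal{T}_V W, X) \;=\; -\,g_M\bigl(RW,\ \mathcal{T}_V B_R X + \mathcal{H}\nabla_V C_R X\bigr)
$$
that you derive (in the equivalent form $g_M(\mathcal{H}\nabla_V RX,RW)=-g_M(X,\mathcal{T}_V W)$) is exactly the paper's computation, and it correctly gives b) $\Rightarrow$ a) together with the equivalence of a) with the $R(\ker F_*)$-component of the identity in b). The genuine gap is precisely the step you flag in your last paragraph and then dismiss as ``delicate bookkeeping'': writing the left side of b) as $\mathcal{H}\nabla_V(RX)=R\mathcal{T}_V X + C_R\mathcal{H}\nabla_V X$, hypothesis a) controls only the first summand (it gives $\mathcal{T}_V X=-g_M(H,X)V$) and says nothing whatever about the $\mu_R$-part $C_R\mathcal{H}\nabla_V X$, which need not vanish. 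No bookkeeping can close this, because the claim is false. Quick structural reason: the identity in b) is not tensorial in $X$ --- replacing $X$ by $fX$ adds $(Vf)\,C_R X$ to the left side while the right side is $f$-linear --- so b) holding for all $X\in\Gamma((\ker F_*)^{\perp})$ would force $C_R\equiv 0$, i.e.\ $\mu_R=\{0\}$, impossible for a h-anti-invariant submersion with nonzero fibres (Remark~\ref{rem: 3-1}). Concretely, in the paper's first example ($F:\mathbb{R}^{12}\to\mathbb{R}^{9}$) the fibres are totally geodesic, so a) holds with $H=0$; yet for $V=\tfrac{\partial}{\partial x_1}$ and $X=x_1\tfrac{\partial}{\partial x_2}$ one has $B_J X=0$, $C_J X=-x_1\tfrac{\partial}{\partial x_4}$, hence
$$
\mathcal{T}_V B_J X + \mathcal{H}\nabla_V C_J X \;=\; -\tfrac{\partial}{\partial x_4}\;\neq\; 0 \;=\; -g_M(H,X)JV,
$$
so condition c) fails. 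Thus a) does not imply b), c), d), and the ``$\mu_R$-contribution'' you promise to control is a genuine obstruction, not a bookkeeping issue.

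You should know that this defect is inherited from the paper itself: its proof derives the same pairing identity and then asserts ``it is easy to check'' the equivalence, which is exactly the unjustified passage from the projected identity to the full vector identity that you noticed. What your argument (and the paper's) actually establishes is: b) $\Rightarrow$ a); and a) $\Leftrightarrow$ $P_R\bigl(\mathcal{T}_V B_R X + \mathcal{H}\nabla_V C_R X\bigr)=-g_M(H,X)RV$ $\Leftrightarrow$ $\mathcal{T}_V X=-g_M(H,X)V$, for each $R\in\{I,J,K\}$. A correct statement of the theorem should use one of these projected conditions in place of b), c), d); note that even restricting to basic $X$ does not help, since the spurious term then becomes $C_R\mathcal{A}_X V$, which is still not controlled by umbilicity of the fibres. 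So: same approach as the paper, an honest identification of the weak point, but the resolution you defer to does not exist --- it is the statement, not the bookkeeping, that needs repair.
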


\begin{proof}
Given $V,W\in \Gamma(\ker F_*)$, $X\in \Gamma((\ker F_*)^{\perp})$, and $R\in \{ I,J,K \}$, we obtain
\begin{align*}
g_M(\mathcal{T}_V W, X) &= g_M(\nabla_V RW, RX)    \\
      &= -g_M(RW, \nabla_V B_R X + \nabla_V C_R X)   \\
      &= -g_M(RW, \mathcal{T}_V B_R X + \mathcal{H}\nabla_V C_R X)
\end{align*}
so that it is easy to check that
$$
\mathcal{T}_V W = g_M(V, W)H \Leftrightarrow \mathcal{T}_V B_R X + \mathcal{H}\nabla_V C_R X = -g_M(H, X)RV. 
$$
Hence,
$$
a) \Leftrightarrow b), \quad a) \Leftrightarrow c), \quad a) \Leftrightarrow d).
$$

Therefore, we get the result.
\end{proof}

\begin{lemma}\label{lem: 4-6}
Let $F$ be a h-Lagrangian submersion from a hyperk\"{a}hler
manifold $(M,I,J,K,g_M)$ onto a Riemannian manifold $(N, g_N)$
such that $(I,J,K)$ is a h-Lagrangian basis. Then the following
conditions are equivalent:

a) the distribution $\ker F_*$ defines a totally umbilic foliation on $M$.

b) $\mathcal{T}_V IX = -g_M(H, X)IV$ for $X\in \Gamma((\ker F_*)^{\perp})$ and  $V\in \Gamma(\ker F_*)$.

c) $\mathcal{T}_V KX = -g_M(H, X)KV$ for $X\in \Gamma((\ker F_*)^{\perp})$ and  $V\in \Gamma(\ker F_*)$.

d) $\mathcal{T}_V JX = -g_M(H, X)JV$ for $X\in \Gamma((\ker F_*)^{\perp})$ and  $V\in \Gamma(\ker F_*)$.
\end{lemma}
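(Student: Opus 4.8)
The plan is to follow the now-familiar pattern by which each h-Lagrangian lemma is deduced from its h-anti-invariant counterpart---here Theorem \ref{thm: 4-6}---and then to treat the $J$-case by a separate direct computation. First I would record that for a h-Lagrangian basis the operators $I$ and $K$ interchange $\ker F_*$ and $(\ker F_*)^{\perp}$, so that in the notation of (\ref{eq: 3-1}) one has $B_I X = IX$, $C_I X = 0$ and $B_K X = KX$, $C_K X = 0$ for every $X\in \Gamma((\ker F_*)^{\perp})$. Substituting these into the equivalence established inside the proof of Theorem \ref{thm: 4-6}, namely $\mathcal{T}_V W = g_M(V,W)H \Leftrightarrow \mathcal{T}_V B_R X + \mathcal{H}\nabla_V C_R X = -g_M(H,X)RV$, the term $\mathcal{H}\nabla_V C_R X$ vanishes and the right-hand condition collapses to $\mathcal{T}_V IX = -g_M(H,X)IV$ when $R=I$ and to $\mathcal{T}_V KX = -g_M(H,X)KV$ when $R=K$. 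This yields $a)\Leftrightarrow b)$ and $a)\Leftrightarrow c)$ at once, with no new work.

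For $a)\Leftrightarrow d)$ the previous argument does not apply verbatim, since $J$ preserves both distributions rather than interchanging them. Instead I would compute directly, for $V,W\in\Gamma(\ker F_*)$ and $X\in\Gamma((\ker F_*)^{\perp})$, using that $J$ is parallel and metric-compatible together with $J(\ker F_*)=\ker F_*$ and $J((\ker F_*)^{\perp})=(\ker F_*)^{\perp}$:
\begin{align*}
g_M(\mathcal{T}_V W, X) &= g_M(\nabla_V W, X) = g_M(\nabla_V JW, JX) \\
      &= -g_M(JW, \nabla_V JX) = -g_M(JW, \mathcal{T}_V JX),
\end{align*}
where I have used $g_M(JW,JX)=g_M(W,X)=0$ and then that $JW\in\Gamma(\ker F_*)$ pairs only with the vertical part $\mathcal{V}\nabla_V JX = \mathcal{T}_V JX$ of $\nabla_V JX$ (recall $JX$ is horizontal, so $\mathcal{T}_V JX = \mathcal{V}\nabla_V JX\in\Gamma(\ker F_*)$).

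Finally I would convert this pairing identity into the pointwise statement $d)$. Condition $a)$ says precisely that $g_M(\mathcal{T}_V W, X) = g_M(V,W)g_M(H,X)$ for all such $W,X$, so by the display it is equivalent to $g_M(JW,\mathcal{T}_V JX) = -g_M(V,W)g_M(H,X)$. The step I expect to be the crux is passing from this scalar identity, quantified over $W$, to the vector equation $d)$: here I would exploit that $J$ maps $\ker F_*$ bijectively onto itself, writing an arbitrary $W'\in\Gamma(\ker F_*)$ as $W'=JW$ and using $g_M(V,W)=g_M(JV,JW)=g_M(JV,W')$ to rewrite the identity as $g_M(\mathcal{T}_V JX, W') = -g_M(H,X)g_M(JV,W')$ for all $W'\in\Gamma(\ker F_*)$. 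Since both $\mathcal{T}_V JX$ and $JV$ lie in $\ker F_*$, nondegeneracy of $g_M$ on $\ker F_*$ then forces $\mathcal{T}_V JX = -g_M(H,X)JV$, which is $d)$; the reverse implication is the immediate substitution back into the display. This gives $a)\Leftrightarrow d)$ and completes the proof.
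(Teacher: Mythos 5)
Your proposal is correct and takes essentially the same route as the paper: parts $b)$ and $c)$ are read off from the equivalence inside the proof of Theorem \ref{thm: 4-6} (specialized with $C_I X = 0 = C_K X$ for a h-Lagrangian basis), and part $d)$ rests on the same identity $g_M(\mathcal{T}_V W, X) = -g_M(JW, \mathcal{T}_V JX)$ that the paper computes. Your closing paragraph simply makes explicit the nondegeneracy argument that the paper compresses into ``we easily check.''
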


\begin{proof}
By the proof of Theorem \ref{thm: 4-6}, we have $a) \Leftrightarrow b)$ and $a) \Leftrightarrow c)$.

Given $V,W\in \Gamma(\ker F_*)$ and $X\in \Gamma((\ker F_*)^{\perp})$, we get
\begin{align*}
g_M(\mathcal{T}_V W, X) &= g_M(\nabla_V JW, JX)    \\
      &= -g_M(JW, \nabla_V JX)   \\
      &= -g_M(JW, \mathcal{T}_V JX)
\end{align*}
so that we easily check that
$$
\mathcal{T}_V W = g_M(V, W)H \Leftrightarrow \mathcal{T}_V JX = -g_M(H, X)JV.
$$
Hence, $a) \Leftrightarrow d)$.

Therefore, the result follows.
\end{proof}

Using Proposition \ref{prop: 4-1}, Theorem \ref{thm: 3-2}, and Theorem \ref{thm: 4-6}, we get

\begin{theorem}
Let $F$ be a h-anti-invariant submersion from a hyperk\"{a}hler
manifold $(M,I,J,K,g_M)$ onto a Riemannian manifold $(N, g_N)$
such that $(I,J,K)$ is a h-anti-invariant basis.
Then the following conditions are equivalent:

a) $(M,g_M)$ is locally a twisted product manifold of the form $M_{(\ker F_*)^{\perp}} \times M_{\ker F_*}$.

b)
$$
g_M(\mathcal{A}_X B_I Y, IV) = g_M(C_I Y, I\mathcal{A}_X V) \ \text{and} \ \mathcal{T}_V B_I X + \mathcal{H}\nabla_V C_I X = -g_M(H, X)IV
$$
for $V\in \Gamma(\ker F_*)$ and $X,Y\in \Gamma((\ker F_*)^{\perp})$.

c)
$$
g_M(\mathcal{A}_X B_J Y, JV) = g_M(C_J Y, J\mathcal{A}_X V) \ \text{and} \ \mathcal{T}_V B_J X + \mathcal{H}\nabla_V C_J X = -g_M(H, X)JV
$$
for $V\in \Gamma(\ker F_*)$ and $X,Y\in \Gamma((\ker F_*)^{\perp})$.

d)
$$
g_M(\mathcal{A}_X B_K Y, KV) = g_M(C_K Y, K\mathcal{A}_X V) \ \text{and} \ \mathcal{T}_V B_K X + \mathcal{H}\nabla_V C_K X = -g_M(H, X)KV
$$
for $V\in \Gamma(\ker F_*)$ and $X,Y\in \Gamma((\ker F_*)^{\perp})$.
\end{theorem}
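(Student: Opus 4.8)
The plan is to reduce the statement to the twisted product characterization of Proposition \ref{prop: 4-1}(b), and then translate the two required foliation properties into the displayed tensorial identities by quoting Theorem \ref{thm: 3-2} and Theorem \ref{thm: 4-6}. Writing $M_1 = M_{(\ker F_*)^{\perp}}$ and $M_2 = M_{\ker F_*}$, the local product $M_{(\ker F_*)^{\perp}} \times M_{\ker F_*}$ carries a twisted product metric precisely when the leaf foliation $L_1$ of $(\ker F_*)^{\perp}$ is totally geodesic and the leaf foliation $L_2$ of $\ker F_*$ is totally umbilic; this is exactly part (b) of Proposition \ref{prop: 4-1}, so the whole theorem is an assembly of that proposition with the two earlier characterizations.

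First I would settle the integrability needed even to speak of a local product. The fibers $\ker F_*$ of a submersion are always integrable, and once $(\ker F_*)^{\perp}$ is totally geodesic it is automatically integrable, since $\mathcal{A}_X Y = \tfrac12 \mathcal{V}[X, Y]$ by (\ref{eq: 2-6}) forces $\mathcal{V}[X, Y] = 0$. Thus, under either side of the claimed equivalence both complementary distributions are integrable, so by Frobenius $M$ is locally a product $M_{(\ker F_*)^{\perp}} \times M_{\ker F_*}$ whose two canonical foliations intersect perpendicularly, and Proposition \ref{prop: 4-1} is applicable.

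Next I would match the two foliation conditions to the stated equations. By Theorem \ref{thm: 3-2}, the distribution $(\ker F_*)^{\perp}$ defines a totally geodesic foliation if and only if $g_M(\mathcal{A}_X B_R Y, RV) = g_M(C_R Y, R\mathcal{A}_X V)$, which is the first equation of each of (b), (c), (d) for $R \in \{ I,J,K \}$. By Theorem \ref{thm: 4-6}, the distribution $\ker F_*$ defines a totally umbilic foliation if and only if $\mathcal{T}_V B_R X + \mathcal{H}\nabla_V C_R X = -g_M(H, X)RV$, which is the second equation. Combining these with Proposition \ref{prop: 4-1}(b) then yields $a) \Leftrightarrow b)$, $a) \Leftrightarrow c)$, $a) \Leftrightarrow d)$, because each choice $R = I, J, K$ independently characterizes the same pair of geometric conditions, exactly as in Theorems \ref{thm: 3-2} and \ref{thm: 4-6}.

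The one point to watch is the bookkeeping of which foliation in Proposition \ref{prop: 4-1} must be totally geodesic and which must be totally umbilic: in the decomposition $M_{(\ker F_*)^{\perp}} \times M_{\ker F_*}$ the first (horizontal) factor must supply the totally geodesic foliation and the second (vertical, fiber) factor the totally umbilic one, which is precisely the pairing provided by Theorem \ref{thm: 3-2} and Theorem \ref{thm: 4-6}. No genuinely new computation is required, so I expect the only real obstacle to be confirming that the integrability hypotheses implicit in Proposition \ref{prop: 4-1} hold, which follows from the totally geodesic condition as noted above.
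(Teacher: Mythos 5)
Your proposal is correct and takes essentially the same route as the paper, which justifies this theorem precisely by assembling Proposition \ref{prop: 4-1}(b) with the characterization of the totally geodesic horizontal foliation in Theorem \ref{thm: 3-2} and the characterization of the totally umbilic vertical foliation in Theorem \ref{thm: 4-6}. Your extra observation on integrability (the fibers are automatically integrable, and a totally geodesic $(\ker F_*)^{\perp}$ is integrable since $\mathcal{V}[X,Y] = 2\mathcal{A}_X Y = 0$) carefully fills in a hypothesis of Proposition \ref{prop: 4-1} that the paper leaves implicit.
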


Using Proposition \ref{prop: 4-1}, Lemma \ref{lem: 3-2}, and Lemma \ref{lem: 4-6}, we have

\begin{lemma}
Let $F$ be a h-Lagrangian submersion from a hyperk\"{a}hler
manifold $(M,I,J,K,g_M)$ onto a Riemannian manifold $(N, g_N)$
such that $(I,J,K)$ is a h-Lagrangian basis. Then the following
conditions are equivalent:

a) $(M,g_M)$ is locally a twisted product manifold of the form $M_{(\ker F_*)^{\perp}} \times M_{\ker F_*}$.

b)
$$
\mathcal{A}_X IY = 0 \ \text{and} \ \mathcal{T}_V IX = -g_M(H, X)IV
$$
for $X,Y\in \Gamma((\ker F_*)^{\perp})$ and  $V\in \Gamma(\ker F_*)$.

c)
$$
\mathcal{A}_X KY = 0 \ \text{and} \ \mathcal{T}_V KX = -g_M(H, X)KV
$$
for $X,Y\in \Gamma((\ker F_*)^{\perp})$ and  $V\in \Gamma(\ker F_*)$.

d)
$$
\mathcal{A}_X JY = 0 \ \text{and} \ \mathcal{T}_V JX = -g_M(H, X)JV
$$
for $X,Y\in \Gamma((\ker F_*)^{\perp})$ and  $V\in \Gamma(\ker F_*)$.
\end{lemma}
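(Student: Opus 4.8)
The plan is to deduce the statement by gluing together two earlier characterizations through Proposition \ref{prop: 4-1}. The key observation is that a Riemannian product-type conclusion of the present kind is governed by part (b) of that proposition: taking $M_1 = M_{(\ker F_*)^{\perp}}$ and $M_2 = M_{\ker F_*}$, the metric $g_M$ is locally that of a twisted product $M_{(\ker F_*)^{\perp}} \times_f M_{\ker F_*}$ if and only if the foliation tangent to $(\ker F_*)^{\perp}$ is totally geodesic and the foliation tangent to $\ker F_*$ is totally umbilic. So condition a) is equivalent to the conjunction of these two foliation properties, and the remaining work is simply to rewrite each property in the tensorial form appearing in b), c), and d).

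Before invoking Proposition \ref{prop: 4-1}, I would check that its hypotheses hold. The vertical distribution $\ker F_*$ is always integrable, and $(\ker F_*)^{\perp}$ meets it perpendicularly by construction. Whenever $(\ker F_*)^{\perp}$ is totally geodesic, it is automatically integrable, since $\nabla_X Y \in \Gamma((\ker F_*)^{\perp})$ forces $[X,Y] = \nabla_X Y - \nabla_Y X \in \Gamma((\ker F_*)^{\perp})$. Hence both distributions define foliations, $M$ is locally the product manifold to which Proposition \ref{prop: 4-1} applies, and part (b) of that proposition is available in both directions.

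Next I would apply the two componentwise characterizations separately. By Lemma \ref{lem: 3-2}, the distribution $(\ker F_*)^{\perp}$ defines a totally geodesic foliation if and only if $\mathcal{A}_X IY = 0$, equivalently $\mathcal{A}_X KY = 0$, equivalently $\mathcal{A}_X JY = 0$, for all $X,Y \in \Gamma((\ker F_*)^{\perp})$. By Lemma \ref{lem: 4-6}, the distribution $\ker F_*$ defines a totally umbilic foliation if and only if $\mathcal{T}_V IX = -g_M(H,X)IV$, equivalently the $K$- or $J$-version, for all $V \in \Gamma(\ker F_*)$ and $X \in \Gamma((\ker F_*)^{\perp})$.

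Finally I would combine these. Condition b) is exactly the conjunction of the $I$-condition of Lemma \ref{lem: 3-2} with the $I$-condition of Lemma \ref{lem: 4-6}; likewise c) pairs the two $K$-conditions and d) the two $J$-conditions. Since each half of b), c), d) is individually equivalent to its corresponding foliation property, each of b), c), d) is equivalent to the statement that $(\ker F_*)^{\perp}$ is totally geodesic and $\ker F_*$ is totally umbilic, which by the first step is equivalent to a). Thus a) $\Leftrightarrow$ b) $\Leftrightarrow$ c) $\Leftrightarrow$ d). The argument is almost entirely bookkeeping; the only step that genuinely requires attention is verifying that the local product-manifold hypothesis of Proposition \ref{prop: 4-1} is met, so that part (b) of that proposition may legitimately be applied.
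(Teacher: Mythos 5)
Your proposal is correct and follows exactly the paper's route: the paper derives this lemma precisely by combining Proposition \ref{prop: 4-1}(b) with Lemma \ref{lem: 3-2} (totally geodesic foliation of $(\ker F_*)^{\perp}$) and Lemma \ref{lem: 4-6} (totally umbilic foliation of $\ker F_*$). Your additional verification that the integrability and perpendicularity hypotheses of Proposition \ref{prop: 4-1} are satisfied is a welcome bit of care that the paper leaves implicit.
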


Now, we consider the non-existence of some types of Riemannian submersions.

Using Proposition \ref{prop: 4-1} and Theorem \ref{thm: 4-4}, we get

\begin{theorem}
Let $(M,E,g_M)$ be an almost quaternionic Hermitian manifold and $(N, g_N)$ a Riemannian manifold.
Then there does not exist a h-anti-invariant submersion from $M = (M,E,g_M)$ onto $(N, g_N)$ such that
$M$ is locally a non-trivial double-twisted product manifold of the form $M_{(\ker F_*)^{\perp}} \times M_{\ker F_*}$.
\end{theorem}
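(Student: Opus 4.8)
The plan is to derive a contradiction from the assumption that such a double-twisted product structure exists. Suppose, for contradiction, that $F$ is a h-anti-invariant submersion with $M$ locally a non-trivial double-twisted product $M_{(\ker F_*)^{\perp}} \times_{(f_1,f_2)} M_{\ker F_*}$. By Proposition \ref{prop: 4-1}(a), this forces both canonical foliations $L_1$ and $L_2$ to be totally umbilic. In the submersion setting these foliations are precisely the integral manifolds of $(\ker F_*)^{\perp}$ and $\ker F_*$, so in particular the horizontal distribution $(\ker F_*)^{\perp}$ defines a totally umbilic foliation on $M$. Note that integrability of both distributions is implicit here, since the double-twisted product structure presupposes that $L_1$ and $L_2$ are genuine foliations intersecting perpendicularly.

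The key step is to invoke Theorem \ref{thm: 4-4}, whose hypotheses are satisfied for any Riemannian submersion (we do not even need the h-anti-invariant structure for this part): since $(\ker F_*)^{\perp}$ is totally umbilic, it is in fact totally geodesic. This is exactly the rigidity coming from the skew-symmetry $\mathcal{A}_X Y = -\mathcal{A}_Y X$ of the O'Neill tensor on horizontal fields, which forces the horizontal mean curvature vector $H^{\perp}$ to vanish. But $L_1$ being totally geodesic rather than merely totally umbilic is incompatible with the nontriviality of the twisting function $f_1$: by Proposition \ref{prop: 4-1}, a totally geodesic $L_1$ together with a totally umbilic $L_2$ yields a twisted product $M_1 \times_f M_2$ (case (b)), not a genuinely double-twisted one. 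More precisely, $H^{\perp} = 0$ means the warping/twisting factor associated to the horizontal leaves is constant, contradicting the assumption that $f_1$ is non-constant.

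Thus the only remaining freedom is in the vertical twisting function $f_2$, and the structure collapses from a non-trivial double-twisted product to (at most) a non-trivial twisted product. Since the definition of \emph{non-trivial double-twisted product} requires \emph{both} $f_1$ and $f_2$ to be non-constant, the forced vanishing $H^{\perp} = 0$ (equivalently, constancy of $f_1$) directly contradicts the standing hypothesis. Therefore no such h-anti-invariant submersion can exist.

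The main obstacle, and the point to state carefully, is the translation between the analytic condition ``$H^{\perp}=0$'' produced by Theorem \ref{thm: 4-4} and the geometric condition ``$f_1$ is constant'' appearing in the definition of non-triviality. One must confirm that under the identification of $\xi_1$ with $(\ker F_*)^{\perp}$, the mean curvature vector $H^{\perp}$ of the horizontal foliation is precisely the one whose vanishing corresponds, via the totally-umbilic defining equation (\ref{eq: 4-1}), to $f_1$ being locally constant. Given the structural results already established, this identification is the heart of the argument; the rest is a direct appeal to Theorem \ref{thm: 4-4} and the case analysis in Proposition \ref{prop: 4-1}.
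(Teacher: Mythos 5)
Your proposal is correct and takes essentially the same route as the paper: the paper's entire proof is the one-line citation ``Using Proposition \ref{prop: 4-1} and Theorem \ref{thm: 4-4}, we get,'' which is exactly your chain (double-twisted product $\Rightarrow$ both canonical foliations totally umbilic by Proposition \ref{prop: 4-1}(a) $\Rightarrow$ $H^{\perp}=0$ by Theorem \ref{thm: 4-4} $\Rightarrow$ contradiction with the non-constancy of $f_1$). You even make explicit the one step the paper leaves tacit, namely that the vanishing of $H^{\perp}$ for the horizontal foliation is what forces the twisting function $f_1$ to be trivial.
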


\begin{lemma}
Let $(M,E,g_M)$ be an almost quaternionic Hermitian manifold and $(N, g_N)$ a Riemannian manifold.
Then there does not exist a h-Lagrangian submersion from $M = (M,E,g_M)$ onto $(N, g_N)$ such that
$M$ is locally a non-trivial double-twisted product manifold of the form $M_{(\ker F_*)^{\perp}} \times M_{\ker F_*}$.
\end{lemma}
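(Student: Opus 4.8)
The plan is to argue by contradiction and to exploit the fact that Theorem~\ref{thm: 4-4} is the whole engine: it holds for an arbitrary Riemannian submersion, so the h-Lagrangian (indeed the entire quaternionic) structure plays no role beyond guaranteeing that $F$ is a Riemannian submersion. This is precisely why the lemma can be stated for an almost quaternionic Hermitian manifold rather than a hyperk\"{a}hler one, and why its proof should be the exact analogue of the immediately preceding theorem for h-anti-invariant submersions, namely an application of Proposition~\ref{prop: 4-1} together with Theorem~\ref{thm: 4-4}.

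First I would suppose, toward a contradiction, that $F$ is an h-Lagrangian submersion for which $(M,g_M)$ is locally a non-trivial double-twisted product $M_{(\ker F_*)^{\perp}} \times_{(f_1,f_2)} M_{\ker F_*}$. In the notation preceding Proposition~\ref{prop: 4-1} this gives $\xi_1 = (\ker F_*)^{\perp}$ and $\xi_2 = \ker F_*$, with $L_1,L_2$ the associated canonical foliations. Since a double-twisted product structure makes the two canonical foliations intersect perpendicularly, part~(a) of Proposition~\ref{prop: 4-1} applies and tells me that both $L_1$ and $L_2$ are totally umbilic foliations on $M$.

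Next I would feed the foliation $L_1$, whose tangent bundle is $(\ker F_*)^{\perp}$, into Theorem~\ref{thm: 4-4}: being totally umbilic it must in fact be totally geodesic, equivalently $H^{\perp}=0$. Thus $L_1$ is totally geodesic while $L_2$ is still totally umbilic, and part~(b) of Proposition~\ref{prop: 4-1} now identifies the metric as that of a twisted product $M_{(\ker F_*)^{\perp}} \times_f M_{\ker F_*} = M_{(\ker F_*)^{\perp}} \times_{(1,f)} M_{\ker F_*}$. In particular the first twisting function is forced to be the constant $f_1\equiv 1$, which contradicts the non-triviality hypothesis demanding that both $f_1$ and $f_2$ be non-constant. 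Hence no such h-Lagrangian submersion exists.

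I expect the only delicate point to be the bookkeeping of which distribution corresponds to $L_1$ versus $L_2$, and the consequent reading of Proposition~\ref{prop: 4-1}(b): the decisive collapse is that total umbilicity of the horizontal distribution upgrades to total geodesy via Theorem~\ref{thm: 4-4}, so the horizontal factor can carry no genuine twisting. There is no real analytic obstacle here, since Theorem~\ref{thm: 4-4} and Proposition~\ref{prop: 4-1} do all the work and the h-Lagrangian data enters only through $F$ being a Riemannian submersion.
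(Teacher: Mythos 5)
Your proposal is correct and is essentially the paper's own (largely unwritten) argument: the paper obtains this lemma exactly by feeding Proposition \ref{prop: 4-1}(a) into Theorem \ref{thm: 4-4} and then invoking Proposition \ref{prop: 4-1}(b), with the horizontal distribution $(\ker F_*)^{\perp}$ correctly cast as the first factor $L_1$, just as you do. The only stylistic difference is that you phrase the final contradiction as ``the twisted-product representation forces $f_1 \equiv 1$'' rather than the equivalent statement that $H^{\perp}=0$ annihilates the twisting of the first factor, but this is the same collapse the paper intends.
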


\begin{theorem}
Let $(M,E,g_M)$ be an almost quaternionic Hermitian manifold and $(N, g_N)$ a Riemannian manifold.
Then there does not exist a h-anti-invariant submersion from $M = (M,E,g_M)$ onto $(N, g_N)$ such that
$M$ is locally a non-trivial twisted product manifold of the form $M_{\ker F_*} \times M_{(\ker F_*)^{\perp}}$.
\end{theorem}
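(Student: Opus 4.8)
The plan is to derive a contradiction by combining Proposition \ref{prop: 4-1} with Theorem \ref{thm: 4-4}, exploiting the fact that in the twisted product $M_{\ker F_*} \times_f M_{(\ker F_*)^{\perp}}$ it is precisely the factor $M_{(\ker F_*)^{\perp}}$ that plays the role of the totally umbilic foliation. First I would argue by contradiction: suppose such an $F$ exists and that $M$ is locally a non-trivial twisted product $M_{\ker F_*} \times_f M_{(\ker F_*)^{\perp}}$. Matching this against the notation of Proposition \ref{prop: 4-1} with $M_1 = M_{\ker F_*}$ and $M_2 = M_{(\ker F_*)^{\perp}}$, part (b) tells us that the canonical foliation $L_1 = \ker F_*$ is totally geodesic while $L_2 = (\ker F_*)^{\perp}$ is totally umbilic.

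The decisive step is then to apply Theorem \ref{thm: 4-4} to this Riemannian submersion: since $(\ker F_*)^{\perp}$ defines a totally umbilic foliation, it automatically defines a totally geodesic foliation, so that $H^{\perp} = 0$ identically. Now both foliations $\ker F_*$ and $(\ker F_*)^{\perp}$ are totally geodesic, and Proposition \ref{prop: 4-1}(d) forces $M$ to be locally a (usual) Riemannian product $M_{\ker F_*} \times M_{(\ker F_*)^{\perp}}$. Equivalently, the vanishing of the mean curvature $H^{\perp}$ of the umbilic leaves is exactly the condition that makes the warping function $f$ constant, i.e. reduces the twisted product to the trivial case. This contradicts the standing non-triviality hypothesis, and the theorem follows. (The argument is entirely parallel to the preceding non-existence results, which were likewise obtained by combining Proposition \ref{prop: 4-1} and Theorem \ref{thm: 4-4}.)

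The main point to handle carefully is the bookkeeping of the order of the two factors, since this is what makes the statement a genuine non-existence rather than a list of geometric conditions. Theorem \ref{thm: 4-4} applies to $(\ker F_*)^{\perp}$, but, as the Remark following it emphasizes, it has no counterpart for $\ker F_*$ (because $\mathcal{A}_X Y = -\mathcal{A}_Y X$ has no analogue for the symmetric tensor $\mathcal{T}$). In the present statement the umbilic factor is $M_{(\ker F_*)^{\perp}}$, so Theorem \ref{thm: 4-4} does bite and produces the obstruction; had the factors appeared in the opposite order, the umbilic foliation would be $\ker F_*$ and the argument would break down, which is exactly why the earlier twisted-product theorem with the reversed ordering yields honest characterizing conditions instead of an impossibility. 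I would therefore state explicitly which of $L_1, L_2$ is totally umbilic before invoking Theorem \ref{thm: 4-4}, so that both that application and the concluding appeal to non-triviality are unambiguous.
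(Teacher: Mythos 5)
Your proposal is correct and takes essentially the same route as the paper: the paper gives no separate proof of this theorem, presenting it (like the preceding non-existence results) as an immediate consequence of Proposition \ref{prop: 4-1} and Theorem \ref{thm: 4-4}, which is precisely your argument --- Proposition \ref{prop: 4-1}(b) makes $\ker F_*$ totally geodesic and $(\ker F_*)^{\perp}$ totally umbilic, Theorem \ref{thm: 4-4} upgrades the latter to totally geodesic, and Proposition \ref{prop: 4-1}(d) then forces a usual local product, contradicting non-triviality. Your bookkeeping remark about the order of the factors (the umbilic factor must be $M_{(\ker F_*)^{\perp}}$, since Theorem \ref{thm: 4-4} has no analogue for $\ker F_*$) is exactly the reason this ordering yields a non-existence statement while the reversed ordering only yields characterizing conditions.
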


\begin{lemma}
Let $(M,E,g_M)$ be an almost quaternionic Hermitian manifold and $(N, g_N)$ a Riemannian manifold.
Then there does not exist a h-Lagrangian submersion from $M = (M,E,g_M)$ onto $(N, g_N)$ such that
$M$ is locally a non-trivial twisted product manifold of the form $M_{\ker F_*} \times M_{(\ker F_*)^{\perp}}$.
\end{lemma}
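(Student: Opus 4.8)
The plan is to argue by contradiction and to reduce the twisted-product hypothesis to the rigidity statement of Theorem \ref{thm: 4-4}. Suppose such a map $F$ exists, so that $(M, g_M)$ is locally a non-trivial twisted product $M_{\ker F_*} \times_f M_{(\ker F_*)^{\perp}}$. First I would invoke Proposition \ref{prop: 4-1}(b), applied with the first factor $M_1 = M_{\ker F_*}$ and the second factor $M_2 = M_{(\ker F_*)^{\perp}}$: the twisted-product structure then forces the canonical foliation tangent to $\ker F_*$ to be totally geodesic and the canonical foliation tangent to $(\ker F_*)^{\perp}$ to be totally umbilic. It is essential here to track the order of the factors, since in the earlier existence-type theorems the roles were reversed; in the present form it is precisely $(\ker F_*)^{\perp}$ that must carry the umbilic (twisting) part.

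Next I would apply Theorem \ref{thm: 4-4} to the distribution $(\ker F_*)^{\perp}$. Since $(\ker F_*)^{\perp}$ defines a totally umbilic foliation, that theorem shows it in fact defines a totally geodesic foliation (equivalently, $H^{\perp} = 0$). Hence both foliations, the one tangent to $\ker F_*$ and the one tangent to $(\ker F_*)^{\perp}$, are totally geodesic.

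Finally, by Proposition \ref{prop: 4-1}(d), having both canonical foliations totally geodesic means that $g_M$ is locally the metric of a (usual) Riemannian product $M_{\ker F_*} \times M_{(\ker F_*)^{\perp}}$. But a Riemannian product is exactly a twisted product with constant twisting function $f$, so the twisted product is trivial, contradicting the assumed non-triviality. This yields the desired non-existence.

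I expect no serious obstacle: the argument is essentially identical to that of the preceding theorem for h-anti-invariant submersions, and in fact uses only that $F$ is a Riemannian submersion, the h-Lagrangian structure playing no role. The single point requiring care is the bookkeeping of which factor is which in Proposition \ref{prop: 4-1}, so that Theorem \ref{thm: 4-4}, which constrains only the horizontal distribution $(\ker F_*)^{\perp}$, is invoked on the correct foliation.
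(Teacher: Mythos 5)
Your proposal is correct and is essentially the paper's own argument: the paper obtains this lemma precisely by combining Proposition~\ref{prop: 4-1} (parts (b) and (d)) with Theorem~\ref{thm: 4-4}, i.e., the twisted-product hypothesis makes the vertical foliation totally geodesic and the horizontal foliation totally umbilic, Theorem~\ref{thm: 4-4} upgrades the horizontal foliation to totally geodesic, and Proposition~\ref{prop: 4-1}(d) then forces a usual Riemannian product, contradicting non-triviality. Your two side remarks --- that the factor order matters so that Theorem~\ref{thm: 4-4} is applied to the second (umbilic) factor, and that only the Riemannian-submersion structure (not the h-Lagrangian condition) is actually used --- are both accurate and consistent with how the paper deduces all four non-existence statements at once.
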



\begin{thebibliography}{30}
\addcontentsline{toc}{section}{References}




\bibitem{A}
C. Altafini,
\newblock {\em Redundant robotic chains on Riemannian submersions},
\newblock IEEE Transactions on Robotics and Automation (2004), \textbf{20} No. 2, 335-340.



\bibitem{AM}
D. V. Alekseevsky, S. Marchiafava,
\newblock {\em Almost complex submanifolds of quaternionic manifolds},
\newblock In: Proceedings of the colloquium on differential geometry, Debrecen (Hungary), 25-30 July 2000, Inst. Math. Inform. Debrecen, 2001, 23-38.


\bibitem{B}
A. L. Besse,
\newblock {\em Einstein manifolds},
\newblock Springer Verlag, Berlin, 1987.




\bibitem{BL}
J. P. Bourguignon, H. B. Lawson,
\newblock {\em A mathematician's visit to Kaluza-Klein theory},
\newblock Rend. Semin. Mat. Torino Fasc. Spec. (1989), 143-163.




\bibitem{BL2}
J. P. Bourguignon, H. B. Lawson,
\newblock {\em Stability and isolation phenomena for Yang-mills fields},
\newblock Commum. Math. Phys. (1981), \textbf{79}, 189-230.



\bibitem{BW}
P. Baird, J. C. Wood,
\newblock {\em Harmonic morphisms between Riemannian manifolds},
\newblock Oxford science publications, 2003.



\bibitem{C2}
D. Chinea,
\newblock {\em Almost contact metric submersions},
\newblock Rendiconti del Circolo Matematico di Palermo  (1985), \textbf{34} No. 1, 89-104.




\bibitem{CMMS}
V. Cort\'{e}s, C. Mayer, T. Mohaupt, F. Saueressig,
\newblock {\em Special geometry of Euclidean supersymmetry 1. Vector multiplets},
\newblock J. High Energy Phys. (2004), \textbf{03}, 0-28.





\bibitem{FIP}
    M. Falcitelli, S. Ianus, and A. M. Pastore,
    \newblock {\em Riemannian submersions and related topics},
    \newblock World Scientific Publishing Co., 2004.





\bibitem{G}
A. Gray,
\newblock {\em Pseudo-Riemannian almost product manifolds and submersions},
\newblock J. Math. Mech, (1967), \textbf{16}, 715-737.






\bibitem{IMV}
S. Ianus, R. Mazzocco, G. E. Vilcu,
\newblock {\em Riemannian submersions from quaternionic manifolds },
\newblock Acta. Appl. Math.(2008) \textbf{104}, 83-89.






\bibitem{IV}
S. Ianus, M. Visinescu,
\newblock {\em Kaluza-Klein theory with scalar fields and generalized Hopf manifolds},
\newblock Class. Quantum Gravity (1987), \textbf{4}, 1317-1325.




\bibitem{IV2}
S. Ianus, M. Visinescu,
\newblock {\em Space-time compactification and Riemannian submersions},
\newblock In: Rassias, G.(ed.) The Mathematical Heritage of C. F. Gauss,
 (1991), 358-371, World Scientific, River Edge.



\bibitem{MR}
J. C. Marrero, J. Rocha,
\newblock {\em Locally conformal K\"{a}hler submersions},
\newblock Geometriae Dedicata (1994), \textbf{52} No. 3, 271-289.




\bibitem{MST}
F. Memoli, G. Sapiro and P. Thompson,
\newblock {\em Implicit brain imaging},
\newblock Neuro Image (2004), \textbf{23}, 179-188.



\bibitem{M}
M. T. Mustafa,
\newblock {\em Applications of harmonic morphisms to gravity},
\newblock J. Math. Phys. (2000), \textbf{41}(10), 6918-6929.




\bibitem{O}
B. O'Neill,
\newblock {\em The fundamental equations of a submersion},
\newblock Mich. Math. J., (1966), \textbf{13}, 458-469.




\bibitem{P}
K. S. Park,
\newblock {\em H-slant submersions},
\newblock Bull. Korean Math. Soc. (2012), \textbf{49} No. 2, 329-338.



\bibitem{P2}
K. S. Park,
\newblock {\em H-semi-invariant submersions},
\newblock Taiwan. J. Math. (2012), \textbf{16} No. 5, 1865-1878.



\bibitem{P3}
K. S. Park,
\newblock {\em H-semi-slant submersions from almost quaternionic Hermitian manifolds},
\newblock Taiwan. J. Math. (2014), \textbf{18} No. 6, 1909-1926.




\bibitem{PR}
R. Ponge, H. Reckziegel
\newblock {\em Twisted products in pseudo-Riemannian geometry},
\newblock Geometriae Dedicata, (1993), \textbf{48}, No. 1, 15-25.




\bibitem{S0}
B. Sahin,
\newblock {\em Anti-invariant Riemannian submersions from almost Hermitian manifolds},
\newblock  Cent. Eur. J. Math., (2010), \textbf{8}, No. 3, 437-447.



\bibitem{S1}
B. Sahin,
\newblock {\em Riemannian submersions from almost Hermitian manifolds},
\newblock  Taiwanese J. Math., (2013), \textbf{17}, No. 2, 629-659. 



\bibitem{W}
B. Watson,
\newblock {\em Almost Hermitian submersions},
\newblock J. Differential Geom., (1976), \textbf{11}(1), 147-165.





\bibitem{W2}
B. Watson,
\newblock {\em $G,G'$-Riemannian submersions and nonlinear gauge field equations of general relativity},
\newblock In: Rassias, T. (ed.) Global Analysis - Analysis on manifolds,
dedicated M. Morse. Teubner-Texte Math., (1983), \textbf{57},
324-349, Teubner, Leipzig.








\end{thebibliography}
\end{document}